\newcommand{\norm}[1]{\left\lVert#1\right\rVert}
\def\<{{\langle}}
\def\>{{\rangle}}
\def\R{{\mathbb{R}}}
\newcommand{\bs}[1]{\boldsymbol{#1}}
\newcommand{\bit}{\begin{itemize}}
\newcommand{\eit}{\end{itemize}}
\newcommand{\ipar}{m}
\newcommand{\bP}{\mathbf{P}}
\newcommand{\bH}{\mathbf{H}}
\newcommand{\prmean}{\ipar_{\text{pr}}}
\newcommand{\pilike}{\pi_{\text{like}}}
\newcommand{\Gnoise}{\mathbf{\Gamma}_{\text{n}}}
\newcommand{\obs}{\mathbf{y}}
\newcommand{\qoi}{\boldsymbol{\rho}}
\newcommand{\logdet}[1]{\operatorname{logdet}\left( {#1} \right)}
\newcommand{\ave}{\mathbb{E}}
\newcommand{\bipar}{\mathbf{\ipar}}
\newcommand{\bF}{\mathbf{F}}
\newcommand{\bpostcov}{\mathbf{\Gamma}_{\text{post}}}
\newcommand{\bCpr}{\mathbf{\Gamma}_{\text{pr}}}
\newcommand{\bHmisfit}{\mathbf{H}_m}
\newcommand{\bpHmisfit}{\widetilde{\mathbf{H}}_m}
\DeclareMathOperator*{\argmax}{arg\,max}
\crefname{hypothesis}{Hypothesis}{Hypotheses}
\title{An efficient method for goal-oriented linear Bayesian optimal experimental design: Application to optimal sensor placement \thanks{Submitted to the editors DATE.
\funding{This research was partially funded by DOE ASCR 
  DE-SC0019303 and DE-SC0021239, DOD MURI FA9550-21-1-0084, 
  and NSF DMS-2012453.}}}
\author{Keyi Wu\thanks{Department of Mathematics, The University of Texas at Austin, TX 
  (\email{keyiwu@math.utexas.edu}).}
\and Peng Chen\thanks{Oden Institute for Computational Engineering and Sciences, The University of Texas at Austin, TX 
  (\email{peng@oden.utexas.edu},\email{omar@oden.utexas.edu}).}
\and Omar Ghattas\footnotemark[3]}
\begin{document}

\maketitle

\begin{abstract}
Optimal experimental design (OED) plays an important role in the problem of identifying uncertainty with limited experimental data. In many applications, we seek to minimize the uncertainty of a predicted quantity of interest (QoI) based on the solution of the inverse problem, rather than the inversion model parameter itself. In these scenarios, we develop an efficient method for goal-oriented optimal experimental design (GOOED) for large-scale Bayesian linear inverse problem that finds sensor locations to maximize the expected information gain (EIG) for a predicted QoI. By deriving a new formula to compute the EIG, exploiting low-rank structures of two appropriate operators, we are able to employ an online-offline decomposition scheme and a swapping greedy algorithm to maximize the EIG at a cost measured in model solutions that is independent of the problem dimensions. We provide detailed error analysis of the approximated EIG, and demonstrate the efficiency, accuracy, and both data- and parameter-dimension independence of the proposed algorithm for a contaminant transport inverse problem with infinite-dimensional parameter field.
\end{abstract}

\begin{keywords}
  optimal experimental design, goal-oriented, Bayesian inverse problems, low-rank approximations
\end{keywords}

\begin{AMS}
  62K05, 35Q62, 62F15, 35R30, 35Q93, 65C60, 90C27
\end{AMS}

\section{Introduction}
Optimizing the acquisition of data---e.g., what, where, and when to measure, what experiments to run---to maximize information gained from the data 
is a fundamental and ubiquitous problem across all of the natural and social sciences, engineering, 
medicine, and technology. Just three important examples include optimal observing system design for ocean climate data \cite{LooseHeimbach21}, optimal sensor placement for early warning of tsunami waves \cite{FerrolinoLopeMendoza20}, and optimal experimental design to accelerate MRI imaging  \cite{BakkerHoofWelling20}. Bayesian optimal experimental design (BOED)---including formulations as active learning, Bayesian optimization, and sensor placement---provides a probabilistic framework to maximize the expected information gain (EIG) or mutual information (MI) for uncertain parameters or related quantities of interest \cite{ChalonerVerdinelli95}. However, evaluating the EIG remains prohibitive for large-scale, complex models, due to the need to compute double integrals with respect to both parameter and data distributions. Recently, advances in efficiently evaluating the EIG and optimizing the design have been achieved using methods based on posterior Laplace approximation-based EIG estimation \cite{LongScavinoTemponeEtAl13}, myopic posterior sampling for adaptive goal-oriented BOED \cite{KandasamyNeiswangerZhangEtAl19}, EIG estimation by variational inference for BOED \cite{FosterJankowiakBinghamEtAl19}, BOED for implicit models by neural EIG estimation \cite{KleinegesseGutmann20}, and sequential BOED with variable cost structure \cite{ZhengHaydenPachecoEtAl20}.

Interest has intensified in extending BOED to the case of  experiments on, or observations of, complex physical systems, since these can be very expensive (e.g., satellite trajectories, subsurface wells, ocean-bottom acoustic sensors). Such physical systems are typically modeled by partial differential equations (PDEs), which are expensive to solve and often contain infinite-dimensional parameter fields and large numbers of design variables. This presents fundamental challenges to conventional BOED methods, which require prohibitively large numbers of (PDE) model solves.
Several different classes of methods have been developed to tackle these computational challenges by exploiting (1) sparsity by polynomial chaos approximation of parameter-to-observation maps \cite{HuanMarzouk13,HuanMarzouk14,HuanMarzouk16}, (2) intrinsic low dimensionality by low-rank approximation of (prior-preconditioned and data-informed) operators \cite{AlexanderianPetraStadlerEtAl14, AlexanderianGloorGhattas16,AlexanderianPetraStadlerEtAl16, SaibabaAlexanderianIpsen17,CrestelAlexanderianStadlerEtAl17,AttiaAlexanderianSaibaba18}, and (3) decomposibility by offline (for model-constrained EIG approximation)--online (for design optimization) decomposition \cite{WuChenGhattas20}.


Here, we focus on \emph{goal-oriented} optimal experimental design (GOOED) for large-scale Bayesian inverse problems, in the context of optimal sensor placement. That is, we seek optimal sensor locations that maximize the information gained from the sensors, not about the model parameters, but (of greater practical interest) for a posterior model-predictive goal. 
In particular, we consider linear parameter-to-observable (PtO) maps governed by expensive models (e.g., PDEs) with high-dimensional uncertain parameters (e.g., infinite-dimensional before discretization). In \cite{AttiaAlexanderianSaibaba18}, a gradient-based optimization method is developed to solve the linear GOOED problem to find the optimal sensor locations. However, in each of the possibly very large number of iterations, many model evaluations have to be performed, which makes the algorithm prohibitive if each model evaluation (e.g., solving PDEs) is very expensive.


\textbf{Contributions}. We propose a fast and scalable method for high-dimensional and Bayesian GOOED problems governed by large-scale, expensive-to-solve models. To overcome the curse-of-dimensionality with respect to both parameter and data dimensions, we propose a new computational framework for the EIG with Cholesky factorization and exploit the intrinsic low-dimensionality of the data- and parameter-informed operators. The low-rank properties are revealed by Jacobians and Hessians of the PtO map,
as has been done for 
 model reduction for sampling and deep learning \cite{BashirWillcoxGhattasEtAl08,
	ChenGhattas19a, AlgerChenGhattas20, OLeary-RoseberryVillaChenEtAl22}, Bayesian inference
\cite{Bui-ThanhBursteddeGhattasEtAl12, Bui-ThanhGhattasMartinEtAl13, 
	ChenVillaGhattas17, ChenWuChenEtAl19, ChenWuGhattas20, ChenGhattas20}, optimization
under uncertainty \cite{AlexanderianPetraStadlerEtAl17,ChenVillaGhattas19, ChenHabermanGhattas21}, and BOED
\cite{AlexanderianPetraStadlerEtAl14, AlexanderianPetraStadlerEtAl16,
	CrestelAlexanderianStadlerEtAl17, SaibabaAlexanderianIpsen17, AttiaAlexanderianSaibaba18, WuChenGhattas20}. We use a randomized algorithm for the low-rank approximations, which require only a small and dimension-independent number of large-scale model evaluations and we provide a detailed error analysis for the approximated EIG. Moreover, with the proposed EIG framework, we are able to adopt an efficient offline-online decomposition to solve the optimization problem, where in the offline stage the model-constrained low-rank approximations are performed just once, while in the online stage the design optimization is performed free of model evaluations. Furthermore, for the design optimization, we use a swapping greedy algorithm that first constructs an initial set of sensors using leverage scores, and then swaps the chosen sensors with other candidates until certain convergence criteria are met. Finally, we demonstrate the efficiency, accuracy, and dimension independence (with respect to both data and parameters) of the proposed algorithm for a contaminant transport inverse problem with infinite-dimensional parameter field.

We present background on BOED in \cref{sec:background}, propose our computational framework for GOOED in 
\cref{sec:GOOED}, and report results on experiments in \cref{sec:experiments}. 

\section{Background}
\label{sec:background}

\subsection{Linear Bayesian inverse problem}
\label{sec:linearproblem}
We consider a general linear model 
\begin{equation}
	\label{eq:model}
	\obs = \mathbf{F}\bipar +  \boldsymbol{\epsilon},
\end{equation}
where $\obs \in \R^{d_y}$ is a $d_y$-dimensional observational data vector corrupted by additive Gaussian noise $\boldsymbol{\epsilon} \in \mathcal{N}(\mathbf{0}, \Gnoise)$ with zero mean and covariance $\Gnoise \in \R^{d_y\times d_y}$, $\bipar \in \R^{d_m}$ is a $d_m$-dimensional uncertain parameter vector,  and $\mathbf{F}: \R^{d_m} \mapsto \R^{d_y}$ is a linear PtO map. As a specific case, $\bipar$ is a discretization (e.g., by finite element method) of an infinite-dimensional parameter field in a model described by PDEs, while $\mathbf{F}$ is implicitly given by solving the PDE model.  In this case, the parameter dimension is typically very high, $O(10^6-10^9)$ for practical applications. 


We assume a Gaussian prior $\bipar \sim \mathcal{N}(\bipar_{\text{pr}}, \bCpr)$ with mean $\bipar_{\text{pr}}$ and covariance $\bCpr$ for the parameter $\bipar $ with density  
\begin{equation}
\pi_{\text{pr}} (\bipar) \propto \exp\left(-\frac{1}{2} ||\bipar - \bipar_{\text{pr}}||^2_{\bCpr^{-1}} \right),
\end{equation}
where $||\bipar - \bipar_{\text{pr}}||^2_{\bCpr^{-1}} := (\bipar - \bipar_{\text{pr}})^T\bCpr^{-1}(\bipar - \bipar_{\text{pr}})$.
Then by Bayes' rule the posterior density of $\bipar$ satisfies
\begin{equation}
	\pi_{\text{post}} (\bipar | \obs) \propto \pilike(\obs|\bipar)  \pi_{\text{pr}}(\bipar).
\end{equation}
Here $\pilike(\obs|\bipar)$ is the likelihood function that satisfies 
\begin{eqnarray}
\pilike(y|\bipar) \propto \exp\left( - \Phi(\mathbf{m}, \obs)\right)
\end{eqnarray}
under Gaussian noise $\boldsymbol{\epsilon} \in \mathcal{N}(\mathbf{0}, \Gnoise)$ , where  the potential 
\begin{equation}
\Phi(\mathbf{m}, \obs) := \frac{1}{2} ||\mathbf{F}\mathbf{m} - \obs||^2_{\Gnoise^{-1}}.
\end{equation}
Under the assumption of Gaussian prior and Gaussian noise,  the posterior of $\bipar$ is also Gaussian $\mathcal{N}(\bipar_{\text{map}},\bpostcov)$ with mean 
$
\bipar_{\text{post}} = \bpostcov(\mathbf{F}^* \Gnoise^{-1} \obs + \bCpr^{-1} \bipar_{\text{pr}})
$
and covariance 
$
\bpostcov = (\bHmisfit  + \bCpr^{-1} )^{-1}
$, where 
\begin{equation}
\bHmisfit = \mathbf{F}^* \Gnoise^{-1} \mathbf{F}
\end{equation}
is the (data-misfit) Hessian of the potential $\Phi(\mathbf{m}, \obs)$, and $\mathbf{F}^*$ is the adjoint of $\mathbf{F}$, e.g., by solving the adjoint PDE model.
\subsection{Bayesian optimal experimental design}
\subsubsection{Expected information gain}
The expected information gain (EIG) is defined as the expected (with respect to data) Kullback-Leibler (KL) divergence between the posterior and the prior distributions, 
\begin{equation}\label{eq:EIG}
\Psi :=  \ave_\obs[D_{\text{KL}}(\pi_{\text{post}}(\cdot | \obs) \|\pi_{\text{pr}} )], 
\end{equation}
where the KL divergence is defined as  
\begin{equation}\label{eq:KL}
	D_{\text{KL}}(\pi_{\text{post}}\|\pi_{\text{pr}} ) := \int \ln\left(
	\frac{d\pi_{\text{post}}}{d\pi_{\text{pr}}}\right) d\pi_{\text{post}}.
\end{equation}
For a Bayesian linear inverse problem as formulated in \cref{sec:linearproblem}, the EIG $\Psi$ admits the closed form \cite{AlexanderianGloorGhattas16}
\begin{equation}\label{eq:EIGdiscrete}
\Psi = \frac{1}{2}\logdet{\mathbf{I}_m + \bpHmisfit},
\end{equation}
where $\mathbf{I}_m$ is an identity matrix of size $d_m\times d_m$, and $\bpHmisfit := \bCpr^{\frac{1}{2}}\bHmisfit \bCpr^{\frac{1}{2}}$ is the \textit{prior-preconditioned Hessian} that includes both data and prior information.

\subsubsection{BOED for sensor placement}
\label{sec:sensor}
We consider an optimal sensor placement problem. Assume we have a collection of $d$ candidate sensors $\{ s_i\}^d_{i=1}$. We need to choose a much smaller number $r < d$ of sensors (due to a limited budget or physical constraints) at which data are collected. The OED problem seeks to find the best sensor combination from the candidates. We use a Boolean design matrix $W \in \mathcal{W} \subset \R^{r\times d} $ to represent sensor placement such that $W_{ij} = 1$ if the $i$-th sensor is placed at the $j$-th candidate location, i.e.,
\begin{equation}\label{eq:W}
W_{ij} \in \{0,1 \},\; 
\sum^d_{j=1} W_{ij}=1, \; \sum^r_{i=1} W_{ij} \in  \{0,1 \}.
\end{equation}

We assume that the observational noise for the $d$ candidate sensors is uncorrelated, with covariance  \begin{equation}\label{eq:Gamma_n_d}
\Gnoise^d = \text{diag}(\sigma_1^2,\dots, \sigma_d^2).
\end{equation}
As a result, for any design $W$ with the covariance for the observation noise $\boldsymbol{\epsilon}$ as $\Gnoise(W) = W \Gnoise^d W^T$, we have 
\begin{equation}
\Gnoise^{-1}(W) = W (\Gnoise^d)^{-1}W^T.
\end{equation}
Denoting by $\mathbf{F}_d$ the PtO map using all $d$ candidate sensors,  we have the design-specific PtO map 
\begin{equation}\label{eq:FW}
\mathbf{F}(W) = W \mathbf{F}_d,
\end{equation}
with its adjoint $\mathbf{F}^* = \mathbf{F}_d^* W^T$. 
We can now state the OED problem as: find an optimal design $W \in \mathcal{W}$ such that 
\begin{equation}
W = \argmax_{W \in  \mathcal{W} }\Psi(W).
\end{equation}
\section{Goal-oriented optimal experimental design}
\label{sec:GOOED}

The classical OED problem seeks a design that maximizes the information gain for the parameter vector $\bipar$. In this work, we consider a goal-oriented optimal experimental design (GOOED) problem that maximizes the information gain of a predicted \textit{quantity of interest} (QoI) $\qoi \in \R^p$, which is assumed to be a linear function of the parameter $\bipar$,
\begin{equation}\label{eq:QoI}
	\qoi = \bP \bipar,
\end{equation}
where $\bP: \R^{d_m} \mapsto \R^{d_\rho}$ is a linear map that typically involves model evaluation (e.g., solving PDEs). 
Due to linearity, the prior distribution of $\qoi$ is Gaussian $\mathcal{N} (\qoi_{\text{pr}},\mathbf{\Sigma}_{\text{pr}})$ with mean $
	\qoi_{\text{pr}} = \bP \bipar_{\text{pr}}$ and covariance $ \mathbf{\Sigma}_{\text{pr}}= \bP \bCpr \bP^*$,
where $\bP^*$ is the adjoint of $\bP$. Moreover, the posterior distribution of $\qoi$ is also Gaussian $\mathcal{N} (\qoi_{\text{post}},\mathbf{\Sigma}_{\text{post}})$ with mean $
\qoi_{\text{post}} = \bP \bipar_{\text{post}}$ and covariance $ \mathbf{\Sigma}_{\text{post}}= \bP \bpostcov \bP^*$.

\subsection{Expected information gain for GOOED}
To construct an expression for EIG for GOOED, we first introduce \cref{prop:qoi} \cite{SpantiniCuiWillcoxEtAl17}, which relates the observational data $\obs$ and the QoI $\qoi$.
\begin{proposition}\label{prop:qoi}
Model \eqref{eq:model} and QoI \eqref{eq:QoI} lead to
	\begin{equation}\label{eq:model-rho}
		\obs = \bF \bP_{\dagger} \qoi + \boldsymbol{\eta},
	\end{equation}
	where $\bP_{\dagger}:=\mathbf{\Gamma}_{\text{pr}} \bP^* \mathbf{\Sigma}_{\text{pr}}^{-1}$, and $\boldsymbol{\eta} \sim \mathcal{N}(\mathbf{0},\mathbf{\Gamma}_{\eta})$ with 
	\begin{equation}\label{eq:GammaEta}
		\mathbf{\Gamma}_{\eta} := \Gnoise+\bF(\bCpr-\bCpr\bP^*\mathbf{\Sigma}_{\text{pr}}^{-1}\bP\bCpr)\bF^*,
	\end{equation}
	or equivalently $\mathbf{\Gamma}_{\eta} =\mathbb{C}ov [\boldsymbol{\epsilon}]+\mathbb{C}ov[\bF(\mathbf{I}_m-\bP_{\dagger}\bP)\bipar]$, with $\mathbb{C}ov$ as covariance. Moreover, $\qoi$ and $\boldsymbol{\eta}$ are independent.
\end{proposition}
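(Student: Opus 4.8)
The plan is to split the prior parameter into the component that is a deterministic linear image of the QoI and a residual that is prior-uncorrelated with the QoI; after being pushed through $\bF$ and added to the measurement noise, this residual becomes the effective noise $\boldsymbol{\eta}$. Since the EIG \eqref{eq:EIG} is invariant under translations of $\bipar$, I may assume $\bipar_{\text{pr}}=\mathbf{0}$ (so $\qoi_{\text{pr}}=\mathbf{0}$ as well); the general case follows by shifting $\obs$ and $\qoi$. With $\bP_{\dagger}=\bCpr\bP^*\mathbf{\Sigma}_{\text{pr}}^{-1}$, define the residual $\istate:=(\mathbf{I}_m-\bP_{\dagger}\bP)\bipar$, so that $\bipar=\bP_{\dagger}\bP\bipar+\istate=\bP_{\dagger}\qoi+\istate$. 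Substituting into \eqref{eq:model} gives $\obs=\bF\bP_{\dagger}\qoi+\bF\istate+\boldsymbol{\epsilon}$, i.e.\ the identity \eqref{eq:model-rho} with $\boldsymbol{\eta}:=\bF\istate+\boldsymbol{\epsilon}$.

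\emph{Law of $\boldsymbol{\eta}$.} Because $\bipar\sim\mathcal{N}(\mathbf{0},\bCpr)$ and $\boldsymbol{\epsilon}\sim\mathcal{N}(\mathbf{0},\Gnoise)$ are independent Gaussians, $\boldsymbol{\eta}$ is Gaussian with zero mean and $\mathbb{C}ov[\boldsymbol{\eta}]=\bF\,\mathbb{C}ov[\istate]\,\bF^*+\Gnoise$. It then remains to compute $\mathbb{C}ov[\istate]=(\mathbf{I}_m-\bP_{\dagger}\bP)\bCpr(\mathbf{I}_m-\bP_{\dagger}\bP)^*$: expanding, using self-adjointness of $\bCpr$ and $\mathbf{\Sigma}_{\text{pr}}$ together with the identity $\bP\bCpr\bP^*=\mathbf{\Sigma}_{\text{pr}}$, the two cross terms and the quadratic term each reduce to $\bCpr\bP^*\mathbf{\Sigma}_{\text{pr}}^{-1}\bP\bCpr$, so the four terms collapse to $\bCpr-\bCpr\bP^*\mathbf{\Sigma}_{\text{pr}}^{-1}\bP\bCpr$. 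This yields \eqref{eq:GammaEta}; leaving the residual factored as $\bF(\mathbf{I}_m-\bP_{\dagger}\bP)\bipar$ instead of substituting its covariance gives the equivalent form $\mathbb{C}ov[\boldsymbol{\epsilon}]+\mathbb{C}ov[\bF(\mathbf{I}_m-\bP_{\dagger}\bP)\bipar]$.

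\emph{Independence.} Since $(\bipar,\boldsymbol{\epsilon})$ is jointly Gaussian, so is $(\qoi,\boldsymbol{\eta})$, and independence is equivalent to vanishing cross-covariance. We have $\mathbb{C}ov[\qoi,\boldsymbol{\epsilon}]=\mathbf{0}$ by hypothesis, and $\mathbb{C}ov[\qoi,\bF\istate]=\bP\bCpr(\mathbf{I}_m-\bP_{\dagger}\bP)^*\bF^*=(\bP\bCpr-\bP\bCpr\bP^*\mathbf{\Sigma}_{\text{pr}}^{-1}\bP\bCpr)\bF^*=\mathbf{0}$, again using $\bP\bCpr\bP^*=\mathbf{\Sigma}_{\text{pr}}$. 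Hence $\mathbb{C}ov[\qoi,\boldsymbol{\eta}]=\mathbf{0}$, so $\qoi$ and $\boldsymbol{\eta}$ are independent.

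\textbf{Main difficulty.} No single step is deep; the one point requiring care is the invertibility of $\mathbf{\Sigma}_{\text{pr}}=\bP\bCpr\bP^*$ used to define $\bP_{\dagger}$, which holds whenever $\bP$ has full row rank (the relevant case here, with $d_\rho$ small). If $\bP$ is rank-deficient one restricts to the range of $\bP$ and replaces $\mathbf{\Sigma}_{\text{pr}}^{-1}$ by a pseudoinverse; the key relation $\bP\bCpr\bP^*\mathbf{\Sigma}_{\text{pr}}^{-1}\bP=\bP$ still holds on that subspace, so the covariance algebra above is unaffected. The rest is simply careful bookkeeping of the expansion in the ``Law of $\boldsymbol{\eta}$'' step.
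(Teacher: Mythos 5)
Your proof is correct. Note that the paper does not actually prove \cref{prop:qoi}; it imports the statement from the cited reference of Spantini et al., and your argument --- decomposing $\bipar=\bP_{\dagger}\qoi+(\mathbf{I}_m-\bP_{\dagger}\bP)\bipar$, computing $\mathbb{C}ov[(\mathbf{I}_m-\bP_{\dagger}\bP)\bipar]$ via the identity $\bP\bCpr\bP^*=\mathbf{\Sigma}_{\text{pr}}$, and checking the vanishing cross-covariance for joint-Gaussian independence --- is exactly the standard derivation behind that citation, so it fills the omitted proof rather than diverging from it. Your two points of care are also the right ones: the statement $\boldsymbol{\eta}\sim\mathcal{N}(\mathbf{0},\mathbf{\Gamma}_\eta)$ is literally true only after centering (otherwise $\boldsymbol{\eta}$ has mean $\bF(\mathbf{I}_m-\bP_{\dagger}\bP)\bipar_{\text{pr}}$, which is harmless for the EIG since only covariances enter), and the definition of $\bP_{\dagger}$ presumes $\mathbf{\Sigma}_{\text{pr}}$ invertible, i.e.\ $\bP$ of full row rank, which holds in the paper's setting where $d_\rho$ is small (indeed $d_\rho=1$ in the experiments).
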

Thus, the EIG for $\qoi$ can be obtained analogously to \eqref{eq:EIGdiscrete},
\begin{equation}\label{eq:EIGz}
	\Psi^{\rho}(W) = \frac{1}{2}\logdet{\mathbf{I}_\rho + \bpHmisfit^{\rho}(W)},
\end{equation}
where $\mathbf{I}_\rho$ is an identity matrix of size $d_\rho \times d_\rho$, and $\bpHmisfit^{\rho}(W) = \mathbf{\Sigma}_{\text{pr}}^{\frac{1}{2}}\bHmisfit^{\rho}(W) \mathbf{\Sigma}_{\text{pr}}^{\frac{1}{2}}$, with $\bHmisfit^{\rho}(W)$ given by
\begin{equation}
	\bHmisfit^{\rho} (W) = (\bF(W)\bP_{\dagger})^*\mathbf{\Gamma}_{\eta}^{-1}(W) \bF(W)\bP_{\dagger}.
\end{equation}

\subsection{Offline-online decomposition for EIG $\Psi^{\rho}$}
The EIG $\Psi^\rho(W)$ depends on $W$ through $\mathbf{F}(W)$ given in \eqref{eq:FW}, which involves expensive model evaluations (e.g., PDE solutions). Since  $\Psi^\rho(W)$ must be evaluated repeatedly in the course of maximizing EIG, these repeated model evaluations would be prohibitive. To circumvent this problem, 
we propose an \emph{offline-online decomposition} scheme, where model-constrained computation of  quantities that are independent of $W$ is performed offline a single time, and the online design optimization is free of any model evaluations. The key result permitting this decomposition is given in the following theorem.
\begin{theorem}\label{eq:OffOn}
For each design $W \in \mathcal{W}$, the goal-oriented EIG $\Psi^\rho(W)$ given in \eqref{eq:EIGz} can be computed as 
\begin{equation}\label{eq:PsiRhoW}
\Psi^\rho(W) =  \frac{1}{2}\logdet{\mathbf{I}_r +L^T W  \mathbf{H}^{\rho}_d W^TL},
\end{equation}
where $\mathbf{I}_r$ is an identity matrix of size $r\times r$, $\mathbf{H}^{\rho}_d $ is given by 
\begin{equation}\label{eq:Hdrho}
\mathbf{H}^{\rho}_d  := \bF_d\bCpr\bP^*\mathbf{\Sigma}_{\text{pr}}^{-1}\bP\bCpr\bF_d^*,
\end{equation}
and
$L$ is given by the Cholesky factorization $\mathbf{\Gamma}_{\eta}^{-1} = L L^T$.
\end{theorem}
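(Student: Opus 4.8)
The plan is to start from the closed form \eqref{eq:EIGz}, $\Psi^\rho(W)=\frac{1}{2}\logdet{\mathbf{I}_\rho+\bpHmisfit^{\rho}(W)}$ with $\bpHmisfit^{\rho}(W)=\mathbf{\Sigma}_{\text{pr}}^{\frac12}(\bF(W)\bP_{\dagger})^{*}\mathbf{\Gamma}_{\eta}^{-1}(W)\bF(W)\bP_{\dagger}\mathbf{\Sigma}_{\text{pr}}^{\frac12}$, and to repeatedly move the determinant between the QoI space $\R^{d_\rho}$ and the reduced data space $\R^{r}$ using Sylvester's identity $\det(\mathbf{I}+AB)=\det(\mathbf{I}+BA)$ (valid with no rank conditions). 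Introducing $M:=\bF(W)\bP_{\dagger}\mathbf{\Sigma}_{\text{pr}}^{\frac12}\in\R^{r\times d_\rho}$ and using symmetry of $\mathbf{\Sigma}_{\text{pr}}^{\frac12}$, we have $\bpHmisfit^{\rho}(W)=M^{*}\mathbf{\Gamma}_{\eta}^{-1}(W)M$, so a first application of Sylvester's identity with $A=M^{*}$ and $B=\mathbf{\Gamma}_{\eta}^{-1}(W)M$ gives $\logdet{\mathbf{I}_\rho+\bpHmisfit^{\rho}(W)}=\logdet{\mathbf{I}_r+\mathbf{\Gamma}_{\eta}^{-1}(W)MM^{*}}$, which now lives on $\R^{r}$.

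The one nontrivial algebraic step is to simplify $MM^{*}$. Using $\bP_{\dagger}=\bCpr\bP^{*}\mathbf{\Sigma}_{\text{pr}}^{-1}$ together with symmetry of $\bCpr$ and $\mathbf{\Sigma}_{\text{pr}}$, one obtains $\bP_{\dagger}\mathbf{\Sigma}_{\text{pr}}\bP_{\dagger}^{*}=\bCpr\bP^{*}\mathbf{\Sigma}_{\text{pr}}^{-1}\bP\bCpr$, whence $MM^{*}=\bF(W)\bP_{\dagger}\mathbf{\Sigma}_{\text{pr}}\bP_{\dagger}^{*}\bF(W)^{*}=\bF(W)\,\bCpr\bP^{*}\mathbf{\Sigma}_{\text{pr}}^{-1}\bP\bCpr\,\bF(W)^{*}$. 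Substituting $\bF(W)=W\bF_d$ and recognizing \eqref{eq:Hdrho} isolates the $W$-independent operator: $MM^{*}=W\big(\bF_d\bCpr\bP^{*}\mathbf{\Sigma}_{\text{pr}}^{-1}\bP\bCpr\bF_d^{*}\big)W^{T}=W\mathbf{H}^{\rho}_d W^{T}$. Hence $\Psi^\rho(W)=\frac{1}{2}\logdet{\mathbf{I}_r+\mathbf{\Gamma}_{\eta}^{-1}(W)\,W\mathbf{H}^{\rho}_d W^{T}}$.

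Finally I would substitute the Cholesky factorization $\mathbf{\Gamma}_{\eta}^{-1}(W)=LL^{T}$ — which exists because $\mathbf{\Gamma}_{\eta}(W)=\Gnoise(W)+\bF(W)(\bCpr-\bCpr\bP^{*}\mathbf{\Sigma}_{\text{pr}}^{-1}\bP\bCpr)\bF(W)^{*}$ is symmetric positive definite by \eqref{eq:GammaEta}, the first summand being SPD and the second a covariance, hence PSD — and apply Sylvester's identity once more with $A=L$ and $B=L^{T}W\mathbf{H}^{\rho}_d W^{T}$ to move the left Cholesky factor to the right: $\logdet{\mathbf{I}_r+LL^{T}W\mathbf{H}^{\rho}_d W^{T}}=\logdet{\mathbf{I}_r+L^{T}W\mathbf{H}^{\rho}_d W^{T}L}$, which is exactly \eqref{eq:PsiRhoW}.

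I do not anticipate a genuine obstacle: the argument is two bookkeeping-heavy applications of Sylvester's identity around the single simplification $\bP_{\dagger}\mathbf{\Sigma}_{\text{pr}}\bP_{\dagger}^{*}=\bCpr\bP^{*}\mathbf{\Sigma}_{\text{pr}}^{-1}\bP\bCpr$. What needs care is (i) tracking which space each determinant lives on so that the reduction to size $r$ is legitimate; (ii) the standing assumptions, implicit in \eqref{eq:EIGz}, that $\mathbf{\Sigma}_{\text{pr}}=\bP\bCpr\bP^{*}$ is invertible (so $\mathbf{\Sigma}_{\text{pr}}^{\pm\frac12}$ are well defined) and that $\mathbf{\Gamma}_{\eta}(W)$ is SPD (so the Cholesky factor $L$ exists); and (iii) noting that $\mathbf{\Gamma}_{\eta}$ and $L$ in \eqref{eq:PsiRhoW} are understood for the given design, i.e.\ $\mathbf{\Gamma}_{\eta}=\mathbf{\Gamma}_{\eta}(W)$, even though the notation suppresses the dependence.
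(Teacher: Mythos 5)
Your proof is correct and follows essentially the same route as the paper: both hinge on the Weinstein--Aronszajn (Sylvester) identity, the simplification $\bP_{\dagger}\mathbf{\Sigma}_{\text{pr}}\bP_{\dagger}^{*}=\bCpr\bP^{*}\mathbf{\Sigma}_{\text{pr}}^{-1}\bP\bCpr$ coming from the definition of $\bP_{\dagger}$, and the Cholesky factorization of $\mathbf{\Gamma}_{\eta}^{-1}(W)$. The only cosmetic difference is that you apply the identity twice (once to pass from $\R^{d_\rho}$ to $\R^{r}$, once to symmetrize the Cholesky factors), while the paper inserts $L L^{T}$ first and swaps everything in a single application; your explicit remark that $\mathbf{\Gamma}_{\eta}(W)$ is SPD, so that $L$ exists, is a small point the paper leaves implicit.
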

\begin{proof}
	To start with, we introduce the Weinstein-Aronszajn identity in \cref{prop:WA} which is proven in \cite{Pozrikidis14}. \begin{proposition}\label{prop:WA}
		Let $A$ and $B$ be matrices of size $m \times n$ and $n \times m$ respectively, then
		\begin{equation}\label{eq:WA}
			\det (\mathbf{I}_{n\times n} + BA) = \det (\mathbf{I}_{m\times m} + AB). 
		\end{equation}
	\end{proposition}	    
	Considering the design problem defined in \cref{sec:sensor}, for each design with design matrix $W$, we have 
	\begin{equation}
		\mathbf{F}(W) = W \mathbf{F}_d, \text{and } \Gnoise(W) = W \Gnoise^dW^T.
	\end{equation}
	We can then reformulate $\Psi^{\rho}$ with the definition in \cref{eq:EIGz} as
	\begin{equation}\label{eq:EIGz2}
		\begin{split}
			{\Psi}^{\rho}(W)&  = \frac{1}{2}\log\det(\mathbf{I} + \bpHmisfit^{\rho})\\
			& = \frac{1}{2}\log\det(\mathbf{I} +\mathbf{\Sigma}_{\text{pr}}^{\frac{1}{2}} (W\bF_d\bP_{\dagger})^*\mathbf{\Gamma}_{\eta}^{-1}(W)(W\bF_d \bP_{\dagger})\mathbf{\Sigma}_{\text{pr}}^{\frac{1}{2}}).
		\end{split}
	\end{equation}
	where 
	\begin{equation}
		\label{eq:gammaeta}
		\begin{split}
			\mathbf{\Gamma}_{\eta}(W) & = \Gnoise(W)+\bF(W)(\bCpr-\bCpr\bP^*\mathbf{\Sigma}_{\text{pr}}^{-1}\bP\bCpr)\bF^*(W)\\
			& = W \Gnoise^dW^T+W\bF_d(\bCpr-\bCpr\bP^*\mathbf{\Sigma}_{\text{pr}}^{-1}\bP\bCpr)\bF_d^*W^T\\
			& = W (\Gnoise^d+\bF_d(\bCpr-\bCpr\bP^*\mathbf{\Sigma}_{\text{pr}}^{-1}\bP\bCpr)\bF_d^*)W^T\\
			& = W (\Gnoise^d+\underbrace{\bF_d\bCpr\bF_d^*}_{:= \bH_d \in \R^{d\times d}}-\underbrace{\bF_d\bCpr\bP^*\mathbf{\Sigma}_{\text{pr}}^{-1}\bP\bCpr\bF_d^*}_{:=\bH_d^{\rho}\in \R^{d\times d}})W^T\\
			& = W (\Gnoise^d+\underbrace{\bH_d-\bH_d^{\rho}}_{:=\Delta \bH_d})W^T\\
			& = W (\Gnoise^d+\Delta \bH_d)W^T.
		\end{split}
	\end{equation}

    To this end, we have 
	\begin{equation}
		\begin{split}
			{\Psi}(W)
			& = \frac{1}{2}\logdet{\mathbf{I} +\mathbf{\Sigma}_{\text{pr}}^{\frac{1}{2}} (W\bF_d\bP_{\dagger})^*\mathbf{\Gamma}_{\eta}^{-1}(W) (W\bF_d \bP_{\dagger})\mathbf{\Sigma}_{\text{pr}}^{\frac{1}{2}}}\\
			& =  \frac{1}{2}\logdet{\mathbf{I} +\underbrace{\mathbf{\Sigma}_{\text{pr}}^{\frac{1}{2}} (W\bF_d\bP_{\dagger})^* L}_{A}\underbrace{L^T (W\bF_d \bP_{\dagger})\mathbf{\Sigma}_{\text{pr}}^{\frac{1}{2}}}_{B}}\\
			& = \frac{1}{2}\logdet{\mathbf{I} +\underbrace{L^T (W\bF_d \bP_{\dagger})\mathbf{\Sigma}_{\text{pr}}^{\frac{1}{2}}}_{B}\underbrace{\mathbf{\Sigma}_{\text{pr}}^{\frac{1}{2}} (W\bF_d\bP_{\dagger})^* L}_{A}}\\
			& = \frac{1}{2}\logdet{\mathbf{I} +L^T (W\bF_d \bP_{\dagger})\mathbf{\Sigma}_{\text{pr}} (W\bF_d\bP_{\dagger})^* L}\\
			& =  \frac{1}{2}\logdet{\mathbf{I} +L^T W\bF_d \mathbf{\Gamma}_{\text{pr}} \bP^* \mathbf{\Sigma}_{\text{pr}}^{-1}\mathbf{\Sigma}_{\text{pr}} \mathbf{\Sigma}_{\text{pr}}^{-1} \bP \mathbf{\Gamma}_{\text{pr}}\bF^*_dW^TL}\\
			& = \frac{1}{2}\logdet{\mathbf{I} +L^T W \bF_d \mathbf{\Gamma}_{\text{pr}} \bP^* \mathbf{\Sigma}_{\text{pr}}^{-1} \bP \mathbf{\Gamma}_{\text{pr}}\bF^*_d W^T L}\\
			& = \frac{1}{2}\logdet{\mathbf{I} +L^T W  \mathbf{H}^{\rho}_d W^TL},
		\end{split}
	\end{equation}
    where we use the Cholesky decomposition $\mathbf{\Gamma}_{\eta}^{-1} = L L^T$ in the second equality, \cref{prop:WA} in the third, definition of $\bP_{\dagger}$ from \eqref{eq:model-rho} in the fifth, and definition of $\mathbf{H}^{\rho}_d$ from \eqref{eq:Hdrho} in the last.
	\end{proof}

Note that $\mathbf{\Gamma}_{\eta}$ defined in \eqref{eq:GammaEta} can be equivalently written as 
\begin{equation}\label{eq:GammaW}
\mathbf{\Gamma}_{\eta}(W) = W \left(\Gnoise^d +  \mathbf{H}_d-  \mathbf{H}^{\rho}_d\right) W^T,
\end{equation}
where $\mathbf{H}_d := \mathbf{F}_d\bCpr\mathbf{F}_d^*$.
Hence evaluation of $\Psi^\rho(W)$ can be decomposed as follows: (1) construct the model-constrained matrices $\mathbf{H}_d$ and $\mathbf{H}^{\rho}_d$ offline just once; and (2) for each $W$ in the online optimization process, assemble a small ($r\times r$) matrix $\mathbf{\Gamma}_{\eta}(W) $ by \eqref{eq:GammaW}, compute a Cholesky factorization $\mathbf{\Gamma}_{\eta}^{-1} = L L^T$, and assemble $\Psi^\rho(W)$ by \eqref{eq:PsiRhoW}, which are all free of the expensive model evaluations. 

Note that $\bH_d \in \R^{d\times d}$ and $\bH_d^{\rho} \in \R^{d \times d}$ are large matrices when we have a large number of candidate sensors $d \gg 1$. Moreover, their construction involves expensive model evaluations when the parameters are high-dimensional, $d_m \gg 1$, e.g., by solving PDEs. Therefore, it is computationally not practical to directly compute and store these matrices. Fortunately, the intrinsic ill-posedness of high-dimensional Bayesian inverse problems---data inform only a low-dimensional subspace of parameter space, e.g., \cite{IsaacPetraStadlerEtAl15, Bui-ThanhBursteddeGhattasEtAl12, PetraMartinStadlerEtAl14, FlathWilcoxAkcelikEtAl11, AmbartsumyanBoukaramBui-ThanhEtAl20}---suggests that these matrices are likely low rank or exhibit rapid spectral decay. We exploit this property and construct low-rank approximations of $\bH_d^{\rho}$ and $\bH_d - \bH_d^{\rho}$ in the next section.


\subsection{Low-rank approximation}
\label{sec:svd}
Let $\Delta \bH_d := \bH_d - \bH_d^{\rho}$, where $\bH_d^{\rho}$ and $\bH_d$ are given in \eqref{eq:Hdrho} and \eqref{eq:GammaW} and integrate data, parameter, and QoI information. Noting that $\bH_d^{\rho}$ and $\Delta \bH_d$ are both symmetric, we compute their low-rank approximation for given tolerances $\epsilon_\zeta, \epsilon_\lambda > 0$ as 
	\begin{equation}\label{eq:svd}
	\hat{\bH}_d^{\rho} = U_k Z_k U_k^T \quad \text{and} \quad \Delta\hat{\bH}_d = V_l\Lambda_l  V_l^T,
\end{equation}
where $(U_k, Z_k)$ represent the $k$ dominant eigenpairs of $\bH_d^{\rho}$ with $Z_k = \text{diag} (\zeta_1,\dots,\zeta_k)$ such that
\begin{equation}
\zeta_1 \geq \zeta_2 \geq \cdots \geq \zeta_k \geq \epsilon_\zeta \geq \zeta_{k+1}\cdots \geq \zeta_d;
\end{equation}
and $(V_l, \Lambda_l)$ represent the $l$ dominant eigenpairs of $\Delta \bH_d$ with $\Lambda_l = \text{diag} (\lambda_1,\dots,\lambda_l)$ such that 
 \begin{equation}
\lambda_1 \geq \lambda_2 \geq \cdots \geq \lambda_l\geq \epsilon_\lambda \geq \lambda_{l+1} \geq \cdots \geq \lambda_d.
 \end{equation}
For the low-rank approximation, we employ a randomized SVD algorithm \cite{HalkoMartinssonTropp11}, which requires only $O(k)$ and $O(l)$ model evaluations, respectively. In practice, $k, l \ll d$. More details on the algorithm applied to the example problem in \cref{sec:experiments} can be found in Appendix A.

With $\hat{\mathbf{\Gamma}}_{\eta} (W) := W \left(\Gnoise^d +  \Delta\hat{\bH}_d\right) W^T$ as an approximation of $\mathbf{\Gamma}_{\eta}(W)$ in \eqref{eq:GammaEta}, we compute the Cholesky factorization $\hat{\mathbf{\Gamma}}_{\eta}^{-1} = \hat{L} \hat{L}^T$. Then we can define an approximate EIG as 
\begin{equation}\label{eq:Psihat}
	\hat{ \Psi }^{\rho}(W) := \frac{1}{2}\logdet{\mathbf{I}_{r}+ \hat{L}^T W \hat{\bH}^{\rho}_d W^T\hat{L}}.
\end{equation}	
The following theorem quantifies the approximation error.
\begin{theorem}\label{thm:low-rank-EIG}
	For any design $W \in \mathcal{W}$, the error for the goal-oriented EIG $\Psi^\rho(W)$ in \eqref{eq:PsiRhoW} by its approximation $\hat{ \Psi }^{\rho}(W) $ in \eqref{eq:Psihat} can be bounded by 
	\begin{equation}\label{eq:boundPsi}
		\begin{split}
		| \Psi^\rho(W) -\hat{ \Psi }^\rho(W)| & \leq  \frac{1}{2} \sum_{i=k+1}^d\log(1 + \zeta_i/\sigma^2_{\text{min}}) \\
		& + \frac{1}{2}\sum_{i=l+1}^k \log(1+ \lambda_i \zeta_1/\sigma_{\text{min}}^4),			
		\end{split}
	\end{equation}
	where $\sigma^2_{\text{min}}:=\min(\sigma^2_1, \dots, \sigma_d^2)$ as defined in \cref{eq:Gamma_n_d}.
\end{theorem}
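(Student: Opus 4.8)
The plan is to interpolate between $\Psi^{\rho}(W)$ in \eqref{eq:PsiRhoW} and $\hat{\Psi}^{\rho}(W)$ in \eqref{eq:Psihat} through one intermediate quantity, so that one gap produces the $\zeta$-sum and the other the $\lambda$-sum. Take $\Psi_*^{\rho}(W):=\frac{1}{2}\logdet{\mathbf{I}_r+L^TW\hat{\bH}_d^{\rho}W^TL}$, which uses the \emph{exact} factor $L$ (with $LL^T=\mathbf{\Gamma}_{\eta}^{-1}(W)$) but the truncated $\hat{\bH}_d^{\rho}$ of \eqref{eq:svd}, so that $|\Psi^{\rho}-\hat{\Psi}^{\rho}|\le|\Psi^{\rho}-\Psi_*^{\rho}|+|\Psi_*^{\rho}-\hat{\Psi}^{\rho}|$. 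Three ingredients are used repeatedly: monotonicity of $M\mapsto\logdet{\mathbf{I}+M}$ on positive semidefinite $M$; the peeling inequality $\logdet{B+B'}-\logdet{B}=\logdet{\mathbf{I}+B^{-1/2}B'B^{-1/2}}\le\logdet{\mathbf{I}+B'}$ valid for $B\succeq\mathbf{I}$ and $B'\succeq0$; and the $W$-uniform bound $\mathbf{\Gamma}_{\eta}^{-1}(W),\ \hat{\mathbf{\Gamma}}_{\eta}^{-1}(W)\preceq\sigma_{\text{min}}^{-2}\mathbf{I}_r$, which holds because $\mathbf{\Gamma}_{\eta}(W)\succeq W\Gnoise^dW^T\succeq\sigma_{\text{min}}^2WW^T=\sigma_{\text{min}}^2\mathbf{I}_r$, and similarly for $\hat{\mathbf{\Gamma}}_{\eta}(W)$.

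For the first gap (the $\zeta$-sum): because $\bH_d^{\rho}\succeq\hat{\bH}_d^{\rho}\succeq0$, monotonicity gives $\Psi^{\rho}\ge\Psi_*^{\rho}$, so only an upper bound on $\Psi^{\rho}-\Psi_*^{\rho}$ is needed. Put $E:=\bH_d^{\rho}-\hat{\bH}_d^{\rho}\succeq0$, whose nonzero eigenvalues are $\zeta_{k+1},\dots,\zeta_d$, and $B:=\mathbf{I}_r+L^TW\hat{\bH}_d^{\rho}W^TL\succeq\mathbf{I}_r$; peeling reduces the difference to $\logdet{\mathbf{I}_r+L^TWEW^TL}$. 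The Weinstein--Aronszajn identity \cref{prop:WA} (applied twice, together with $LL^T=\mathbf{\Gamma}_{\eta}^{-1}$) gives $\det(\mathbf{I}_r+L^TWEW^TL)=\det(\mathbf{I}_d+E^{1/2}W^T\mathbf{\Gamma}_{\eta}^{-1}WE^{1/2})$, and $W^T\mathbf{\Gamma}_{\eta}^{-1}W\preceq\sigma_{\text{min}}^{-2}W^TW\preceq\sigma_{\text{min}}^{-2}\mathbf{I}_d$ yields $\det(\mathbf{I}_d+E^{1/2}W^T\mathbf{\Gamma}_{\eta}^{-1}WE^{1/2})\le\det(\mathbf{I}_d+\sigma_{\text{min}}^{-2}E)=\prod_{i=k+1}^d(1+\zeta_i/\sigma_{\text{min}}^2)$. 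Halving gives the first line of \eqref{eq:boundPsi}.

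For the second gap (the $\lambda$-sum): both $\Psi_*^{\rho}$ and $\hat{\Psi}^{\rho}$ use the \emph{same} rank-$k$ matrix $\hat{\bH}_d^{\rho}=U_kZ_kU_k^T$, so with $N:=WU_k$ (so $\norm{N}_2\le1$ and $W\hat{\bH}_d^{\rho}W^T=NZ_kN^T$) and one more use of \cref{prop:WA}, $\Psi_*^{\rho}=\frac{1}{2}\logdet{\mathbf{I}_k+Z_k^{1/2}N^T\mathbf{\Gamma}_{\eta}^{-1}NZ_k^{1/2}}$ and $\hat{\Psi}^{\rho}=\frac{1}{2}\logdet{\mathbf{I}_k+Z_k^{1/2}N^T\hat{\mathbf{\Gamma}}_{\eta}^{-1}NZ_k^{1/2}}$. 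From $\Delta\bH_d\succeq\Delta\hat{\bH}_d$ we get $\mathbf{\Gamma}_{\eta}\succeq\hat{\mathbf{\Gamma}}_{\eta}$, hence $\mathbf{\Gamma}_{\eta}^{-1}\preceq\hat{\mathbf{\Gamma}}_{\eta}^{-1}$ and $\Psi_*^{\rho}\le\hat{\Psi}^{\rho}$; peeling reduces the difference to $\logdet{\mathbf{I}_k+D}$ with $D:=Z_k^{1/2}N^T(\hat{\mathbf{\Gamma}}_{\eta}^{-1}-\mathbf{\Gamma}_{\eta}^{-1})NZ_k^{1/2}\succeq0$. Writing $\hat{\mathbf{\Gamma}}_{\eta}^{-1}-\mathbf{\Gamma}_{\eta}^{-1}=\hat{\mathbf{\Gamma}}_{\eta}^{-1}W(\Delta\bH_d-\Delta\hat{\bH}_d)W^T\mathbf{\Gamma}_{\eta}^{-1}$ and combining $\norm{\mathbf{\Gamma}_{\eta}^{-1}}_2,\norm{\hat{\mathbf{\Gamma}}_{\eta}^{-1}}_2\le\sigma_{\text{min}}^{-2}$, $\norm{Z_k}_2=\zeta_1$, the inequality $\lambda_j(A^TSA)\le\norm{A}_2^2\lambda_j(S)$ for $S\succeq0$, and the Cauchy-interlacing bound $\lambda_j(W(\Delta\bH_d-\Delta\hat{\bH}_d)W^T)\le\lambda_j(\Delta\bH_d-\Delta\hat{\bH}_d)=\lambda_{l+j}$ (where $\lambda_j(\cdot)$ is the $j$th largest eigenvalue), one obtains $\lambda_j(D)\le\zeta_1\lambda_{l+j}/\sigma_{\text{min}}^4$. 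Summing $\log(1+\lambda_j(D))$ over the at most $k$ eigenvalues of $D$ then produces a sum of the form $\sum_i\log(1+\lambda_i\zeta_1/\sigma_{\text{min}}^4)$ with $i$ starting at $l+1$; matching the stated upper limit $i=k$ relies additionally on a rank count for $D$ (its effective rank being at most $k-l$). Adding the two gaps gives \eqref{eq:boundPsi}.

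The hard part is the second gap. All estimates must be uniform over the Boolean design $W\in\mathcal{W}$, and a design matrix satisfies only $WW^T=\mathbf{I}_r$, not $W^TW=\mathbf{I}_d$; so unitary invariance has to be replaced by $W^TW\preceq\mathbf{I}_d$ and by Cauchy interlacing for the compression $W(\cdot)W^T$. The genuinely subtle point is matching the index range $i=l+1,\dots,k$ in the $\lambda$-sum: one must exploit simultaneously that $\hat{\bH}_d^{\rho}$ has rank $k$ (so the $k\times k$ perturbation $D$ has at most $k$ nonzero eigenvalues, the $j$th controlled by $\lambda_{l+j}$) and bound the effective rank of $D$ by that of the dropped block $\Delta\bH_d-\Delta\hat{\bH}_d$ as seen through the range of $\hat{\bH}_d^{\rho}$, so that the sum truncates at $i=k$ rather than at $i=l+k$; reconciling these two rank counts is the main obstacle. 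The remaining pieces---the $\zeta$-sum, the two sign/monotonicity claims, and the $\sigma_{\text{min}}^{-2}$ operator-norm bounds---are routine once \cref{prop:WA} and the peeling inequality are available.
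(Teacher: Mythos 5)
Your decomposition and toolkit coincide with the paper's proof: the same intermediate quantity (exact $L$ combined with the truncated $\hat{\bH}_d^{\rho}$), the same comparison inequality (\cref{prop:det}, your ``peeling''), the Weinstein--Aronszajn identity, the resolvent identity for $\hat{\mathbf{\Gamma}}_{\eta}^{-1}-\mathbf{\Gamma}_{\eta}^{-1}$, and the uniform bound $\mathbf{\Gamma}_{\eta}^{-1}(W)\preceq \sigma_{\text{min}}^{-2}\mathbf{I}_r$. Your treatment of the first gap is in fact slightly cleaner than the paper's: bounding $W^T\mathbf{\Gamma}_{\eta}^{-1}W\preceq\sigma_{\text{min}}^{-2}\mathbf{I}_d$ and using Loewner monotonicity of $\logdet{\mathbf{I}+\cdot\,}$ dispenses with the singular-value rearrangement lemma (\cref{lem:rearrangement}) that the paper invokes there.

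The one point you single out as the main obstacle is not actually an obstacle, and the fix you sketch for it would fail. There is no reason for $D$ to have rank at most $k-l$: its rank is bounded only by $\min(k,r,d-l)$, and in the paper's own experiments $k=\operatorname{rank}(\bH_d^{\rho})=1$ while $l>k$, so a rank-$(k-l)$ count would force the second error term to vanish identically, which it does not. What your argument (and the paper's) actually delivers is that $D$ has at most $k$ nonzero eigenvalues, because it is conjugated through the rank-$k$ factor $Z_k^{1/2}N^T(\cdot)NZ_k^{1/2}$, and the $j$-th of these is at most $\zeta_1\lambda_{l+j}/\sigma_{\text{min}}^4$ by interlacing; summing gives $\sum_{j=1}^{k}\log(1+\zeta_1\lambda_{l+j}/\sigma_{\text{min}}^4)=\sum_{i=l+1}^{l+k}\log(1+\zeta_1\lambda_i/\sigma_{\text{min}}^4)$. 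This is precisely what the paper's own proof establishes as well (its truncation comes from ``$Z_k$ has rank not larger than $k$,'' i.e.\ at most $k$ nonzero terms, the $i$-th controlled by $\lambda_{l+i}$), so the stated upper limit $i=k$ in \eqref{eq:boundPsi} is best read as $i=l+k$ after re-indexing; state the bound with $k$ terms and do not try to prove a $k-l$ rank count. One last small caution: the factorization $\hat{\mathbf{\Gamma}}_{\eta}^{-1}-\mathbf{\Gamma}_{\eta}^{-1}=\hat{\mathbf{\Gamma}}_{\eta}^{-1}W(\Delta\bH_d-\Delta\hat{\bH}_d)W^T\mathbf{\Gamma}_{\eta}^{-1}$ is not of the symmetric form $A^TSA$ (the two outer factors differ), so to get $\lambda_j(D)\le\zeta_1\lambda_{l+j}/\sigma_{\text{min}}^4$ you should invoke the two-sided singular-value inequality $\upsilon_j(XYZ)\le\|X\|_2\,\upsilon_j(Y)\,\|Z\|_2$ rather than $\lambda_j(A^TSA)\le\|A\|_2^2\,\lambda_j(S)$.
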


\begin{proof}
 	We first introduce necessary properties that are proven in \cite{WuChenGhattas20} for \cref{prop:eig},  \cite{AlexanderianSaibaba18} for \cref{prop:det} and \cite{Yue20} for 
	\cref{prop:rearrangement}.
	\begin{proposition}\label{prop:eig}
		Let $A$ and $B$ be matrices of size $m \times n$ and $n \times m$ respectively, then $AB$ and 
		$BA$ have the same non-zero eigenvalues.
	\end{proposition}
	\begin{proposition}\label{prop:det}
		Let $A,B \in \mathbb{C}^{n \times n}$ be Hermitian positive semidefinite with $A \geq B$  (i.e., $A-B$ is Hermitian positive semidefinite), then
		\begin{equation}
			0 \leq \log \det (I+A) -\log \det (I+B)  \leq \log \det (I+A-B).
		\end{equation}
	\end{proposition}

	\begin{proposition}\label{prop:rearrangement}
		Let $f: \R_+ \to \R$ be a continuous function that is differentiable on $\R_{+}$ (with $x \geq 0$ for $x \in \R_+$). If the function $x \mapsto x f'(x)$ is monotonically increasing on $\R_{+}$. Then for any matrices $A,B \in \R^{n\times m}$, it holds that 
		\begin{equation}
			\sum^n_{i=1} f(\upsilon_i(AB^T)) \leq  \sum^n_{i=1} f(\upsilon_i(A)\upsilon_i(B))
		\end{equation}
		where $\upsilon_i(\cdot)$ denotes the singular values of matrices sorted in non-increasing order. 
	\end{proposition}
	\begin{lemma}\label{lem:rearrangement}
		Let $A \in \R^{n\times m}, B \in \R^{m \times m}$, $A^TA$ and $B$ are Hermitian positive semidefinite, then 
		\begin{equation}
			\log\det (I + A B A^T) \leq \sum^m_{i=1} \log(1+\upsilon_i(A^T A)\upsilon_i(B)).
		\end{equation}
	\end{lemma}
	\begin{proof}
		Since $\log\det (I + A B A^T) = \sum^n_{i=1} \log(1+\upsilon_i(ABA^T))=\sum^n_{i=1} \log(1+\upsilon^2_i(AB^{1/2}))$, let $f(x)=\log(1+x^2)$, which satisfies 
		\cref{prop:rearrangement}, we have 
		\begin{equation}
			\sum^n_{i=1} \log(1+\upsilon^2_i(AB^{1/2})) \leq \sum^n_{i=1} \log(1+\upsilon^2_i(A)\upsilon^2_i(B^{1/2}) )= \sum^m_{i=1} \log(1+\upsilon_i(A^TA)\upsilon_i(B)).
		\end{equation}
	\end{proof}
   
		Denote the eigenvalue decompositions of $\bH^{\rho}_d$ and $\Delta \bH_d$ as 
		\begin{equation}
			\bH_d^{\rho} = U_k Z_k U_k^T + U_\perp Z_\perp U_\perp^T, \text{ and } \Delta \bH_d = V_l \Lambda_l V_l^T + V_\perp \Lambda_\perp V_\perp^T,
		\end{equation}
		where $(Z_k, U_k),(V_l, \Lambda_l)$ represent the dominant eigenpairs, and $(Z_\perp, U_\perp),(V_\perp, \Lambda_\perp)$ represent the remaining eigenpairs. By triangle inequality, we have
		\begin{equation}
			\begin{split}
				|\Psi^{\rho}(W) -\hat{ \Psi }^{\rho}(W)|  &= 
				| \frac{1}{2}\logdet{\mathbf{I}_{r\times r} +L^T W  \mathbf{H}^{\rho}_d W^TL} - \frac{1}{2}\logdet{\mathbf{I}_{r\times r}+ \hat{L}^T W \hat{\bH}^{\rho}_d W^T\hat{L}}|\\
				& \leq \underbrace{|\frac{1}{2}\logdet{\mathbf{I}_{r\times r} +L^T W  \mathbf{H}^{\rho}_d W^TL} -\frac{1}{2}\logdet{\mathbf{I}_{r\times r}+L^T W  \hat{\mathbf{H}}^{\rho}_d W^TL}|}_{(a)}\\
				&~~~~+\underbrace{|\frac{1}{2}\logdet{\mathbf{I}_{r\times r} +L^T W  \hat{\mathbf{H}}^{\rho}_d W^TL}- \frac{1}{2}\logdet{\mathbf{I}_{r\times r}+ \hat{L}^T W \hat{\bH}^{\rho}_d W^T\hat{L}}|}_{(b)}.
			\end{split}
		\end{equation}
		We first look at $(a)$. By \cref{prop:det} and note that $(\mathbf{H}^{\rho}_d-\hat{\mathbf{H}}^{\rho}_d) = U_\perp Z_\perp U_\perp^T$ is Hermitian positive semidefinite, we have 
		\begin{equation}
			\begin{split}
				(a)& \leq \frac{1}{2}\logdet{\mathbf{I}_{r\times r} +L^T W  \mathbf{H}^{\rho}_d W^TL-L^T W  \hat{\mathbf{H}}^{\rho}_d W^TL} \\
				& = \frac{1}{2}\logdet{\mathbf{I}_{r\times r} +L^T W  (\mathbf{H}^{\rho}_d-\hat{\mathbf{H}}^{\rho}_d) W^TL} \\
				& = \frac{1}{2}\logdet{\mathbf{I}_{r\times r} +L^T W   U_\perp Z_\perp U_\perp^T W^TL}.
			\end{split}
		\end{equation}
		Then applying \cref{prop:WA}, we have 
		\begin{equation}
			\label{eq:a}
			\begin{split}
				(a)&  = \frac{1}{2}\logdet{\mathbf{I}_{r\times r} +L^T W   U_\perp Z_\perp^{1/2}Z_\perp^{1/2} U_\perp^T W^TL} \\
				&  = \frac{1}{2}\logdet{\mathbf{I}_{(d-k)\times (d-k)} +Z_\perp^{1/2} U_\perp^T W^TLL^T W   U_\perp Z_\perp^{1/2}} \\
				& = \frac{1}{2}\logdet{\mathbf{I}_{(d-k)\times (d-k)} +Z_\perp^{1/2} U_\perp^T W^T( W(\Gnoise^d + \Delta\bH_d) W^T)^{-1}W   U_\perp Z_\perp^{1/2}}.
			\end{split}
		\end{equation}
		Applying \cref{lem:rearrangement}, let $A = Z_\perp^{1/2} U_\perp^T W^T, B = ( W(\Gnoise^d + \Delta\bH_d) W^T)^{-1}$, we have
		\begin{equation}
			\label{eq:a2}
			\begin{split}
				(a)& \leq \frac{1}{2} \sum_i\log(1 + \upsilon_i(W U_\perp Z_\perp^{1/2}Z_\perp^{1/2} U_\perp^T W^T)\upsilon_i(( W(\Gnoise^d + \Delta\bH_d) W^T)^{-1}))\\
				& = \frac{1}{2} \sum_i\log(1 + \upsilon_i(W U_\perp Z_\perp U_\perp^T W^T)\upsilon_i(( W(\Gnoise^d + \Delta\bH_d) W^T)^{-1})).
			\end{split}
		\end{equation}
By \cref{prop:qoi}, $\Delta \bH_d = \mathbb{C}ov[\bF_d(I-\bP_{\dagger}\bP)\bipar]$, is a covariance matrix, thus is positive semidefinite. The smallest eigenvalue of $\Gnoise^d + \Delta\bH_d$ is greater than the smallest eigenvalue of $\Gnoise^d$. Hence $\upsilon_i( W(\Gnoise^d + \Delta\bH_d) W^T))\geq \sigma^2_{\text{min}}$, i.e., $\upsilon_i(( W(\Gnoise^d + \Delta\bH_d) W^T)^{-1}) \leq 1/\sigma_{\text{min}}^2$. Note that $\upsilon_i(W U_\perp Z_\perp U_\perp^T W^T) \leq \upsilon_i(U_\perp Z_\perp U_\perp^T) = \zeta_i$. Thus we have 
		\begin{equation}
			\label{eq:a3}
			(a) \leq \frac{1}{2} \sum_{i=k+1}^d\log(1 + \zeta_i/\sigma^2_{\text{min}}).
		\end{equation}
		Then we turn to second part $(b)$, with \cref{prop:WA} and \cref{prop:det}, we have
		\begin{equation}
			\begin{split}
				(b)& =|-\frac{1}{2}\logdet{\mathbf{I}_{r\times r} +L^T W  U_k Z_k U_k^T  W^TL}+ \frac{1}{2}\logdet{\mathbf{I}_{r\times r}+ \hat{L}^T W  U_k Z_k U_k^T W^T\hat{L}}|\\
				& = | -\frac{1}{2}\logdet{\mathbf{I}_{k\times k} +Z_k^{1/2} U_k^T  W^TLL^T W  U_k Z_k^{1/2}}+ \frac{1}{2}\logdet{\mathbf{I}_{k\times k}+ Z_k^{1/2} U_k^T W^T\hat{L}\hat{L}^T W  U_k Z_k^{1/2}}|\\
				& \leq \frac{1}{2}\logdet{\mathbf{I}_{k\times k} +  Z_k^{1/2} U_k^T W^T\hat{L}\hat{L}^T W U_k Z_k^{1/2} -Z_k^{1/2}U_k^T W^TLL^T W U_k Z_k^{1/2}}\\
				& = \frac{1}{2}\logdet{\mathbf{I}_{k\times k} + Z_k^{1/2}U_k^T W^T(\hat{L}\hat{L}^T-LL^T) W U_k Z_k^{1/2} }\\
				& =  \frac{1}{2}\logdet{\mathbf{I}_{k\times k} +Z_k^{1/2} U_k^T W^T\underbrace{(( W(\Gnoise^d + \Delta\hat{\bH}_d) W^T)^{-1}-( W(\Gnoise^d + \Delta\bH_d) W^T)^{-1})}_{(c)} W U_k Z_k^{1/2} }.
			\end{split}
		\end{equation}
		Note that $(A+B)^{-1} = A^{-1}-A^{-1}B(A+B)^{-1}$, let $A = W(\Gnoise^d + \Delta\hat{\bH}_d)W^T, B = W(\Delta\bH_d-\Delta\hat{\bH}_d)W^T=W V_\perp \Lambda_\perp V_\perp^T W^T$, we have 
		\begin{equation}
			\begin{split}
				& (A+B)^{-1} = ( W(\Gnoise^d + \Delta\bH_d)W^T)^{-1} \\
				& = (W(\Gnoise^d + \Delta\hat{\bH}_d)W^T)^{-1}-(W(\Gnoise^d + \Delta\hat{\bH}_d)W^T)^{-1}W V_\perp \Lambda_\perp V_\perp^T W^T( W(\Gnoise^d + \Delta\bH_d)W^T)^{-1}\\
				\Rightarrow &  (c)=(W(\Gnoise^d + \Delta\hat{\bH}_d)W^T)^{-1}W V_\perp \Lambda_\perp V_\perp^T W^T( W(\Gnoise^d + \Delta\bH_d)W^T)^{-1}
			\end{split}
		\end{equation}
		Then we can see that 
		\begin{equation}
			\begin{split}
				(b)&  \leq \frac{1}{2}\logdet{\mathbf{I}_{k\times k}+ Z_k^{1/2}U_k^T W^T  (W(\Gnoise^d + \Delta\hat{\bH}_d)W^T)^{-1}W V_\perp \Lambda_\perp V_\perp^T W^T( W(\Gnoise^d + \Delta\bH_d)W^T)^{-1}W U_k Z_k^{1/2} }\\
				& =\frac{1}{2}\logdet{\mathbf{I}_{(d-l)\times (d-l)}+ \Lambda_\perp^{1/2} V_\perp^T W^T( W(\Gnoise^d + \Delta\hat{\bH}_d)W^T)^{-1} W U_k Z_k^{1/2}Z_k^{1/2}U_k^T W^T ( W(\Gnoise^d + \Delta\bH_d)W^T)^{-1}W V_\perp\Lambda_\perp^{1/2} }\\
				& = \frac{1}{2}\logdet{\mathbf{I}_{(d-l)\times (d-l)}+ \Lambda_\perp^{1/2} V_\perp^T W^T( W(\Gnoise^d + \Delta\hat{\bH}_d)W^T)^{-1} W U_k Z_kU_k^T W^T ( W(\Gnoise^d + \Delta\bH_d)W^T)^{-1}W V_\perp\Lambda_\perp^{1/2} }.
			\end{split}
		\end{equation}
		Applying \cref{lem:rearrangement}, we have
		\begin{equation}\label{eq:b}
			\begin{split}
				(b)&  \leq  \frac{1}{2}\sum_i \log(1+ \upsilon_i(W V_\perp\Lambda_\perp V_\perp^T W^T) \upsilon_{i}(( W(\Gnoise^d + \Delta\hat{\bH}_d)W^T)^{-1} W U_k Z_kU_k^T W^T ( W(\Gnoise^d + \Delta\bH_d)W^T)^{-1} ))\\
				& \leq \frac{1}{2}\sum_{i=l+1}^k \log(1+ \lambda_i \zeta_1/\sigma_{\text{min}}^4),
			\end{split}
		\end{equation}
		where we have used 
		\begin{equation}
		    \begin{split}
    		&\upsilon_{i}(( W(\Gnoise^d + \Delta\hat{\bH}_d)W^T)^{-1} W U_k Z_kU_k^T W^T ( W(\Gnoise^d + \Delta\bH_d)W^T)^{-1} ) \\
    		& \leq v_1 (( W(\Gnoise^d + \Delta\hat{\bH}_d)W^T)^{-1}) v_1 (W U_k Z_kU_k^T W^T) v_1 (( W(\Gnoise^d + \Delta\bH_d)W^T)^{-1}) \leq \zeta_1/\sigma_{\text{min}}^4
		    \end{split}
		\end{equation}
		for $i \leq k$ in the last inequality. Note that it vanishes for $i > k$ as $Z_k$ has rank not larger than $k$.
		Combining \cref{eq:a3} and \cref{eq:b}, 
		\begin{equation}
			|\Psi^{\rho}(W) -\hat{ \Psi }^{\rho}(W)| \leq (a) + (b) \leq \frac{1}{2} \sum_{i=k+1}^d\log(1 + \zeta_i/\sigma^2_{\text{min}}) + \frac{1}{2}\sum_{i=l+1}^k \log(1+ \lambda_i \zeta_1/\sigma_{\text{min}}^4).
		\end{equation}
\end{proof}

We remark that with rapid decay of the eigenvalues $(\zeta_k)_{k\geq 1}$ of $\hat{\bH}_d^{\rho}$ and $(\lambda_l)_{l\geq 1}$ of $\Delta\hat{\bH}_d$, the error bound in \eqref{eq:boundPsi} becomes very small. Moreover, the decay rates are often independent of the (candidate) data dimension $d$ and the parameter dimension $d_m$, as demonstrated in \cref{sec:scalability}. This means that an arbitrarily-accurate EIG approximation can be constructed with a small number, $O(k+l)$, of model solves. 

\subsection{Swapping greedy optimization}
Once the low-rank approximations of $\bH_d^{\rho}$ and $\Delta \bH_d$ are constructed per  \eqref{eq:svd}, we obtain a fast method for evaluating the approximate EIG in \eqref{eq:Psihat}, with certified approximation error given by \cref{thm:low-rank-EIG}. We emphasize that this fast computation does not involve expensive model evaluations (e.g., large-scale PDE solves) for any given design $W$. We now turn to the (combinatorial) optimization problem of finding the optimal design matrix $W$,
\begin{equation}
	W = \argmax_{W\in\mathcal{W}} \hat{\Psi}^{\rho} (W).
\end{equation}
We next introduce a swapping greedy algorithm to solve this problem 
requiring only evaluation of $\hat{\Psi}^{\rho} (W)$. 

In contrast to classical greedy algorithms that sequentially find the optimal sensors one by one (or batch by batch) \cite{BianBuhmannKrauseEtAl17, JagalurMohanMarzouk20}, we extend a swapping greedy algorithm developed for BOED in \cite{WuChenGhattas20} to solve the GOOED problem. 
Given a current sensor set, it swaps sensors with the remaining sensors to maximize the approximate EIG $\hat{\Psi}^{\rho} (W)$ until convergence. To initialize the chosen sensor set, we 
take advantage of the low-rank approximation $\hat{\bH}_d^{\rho}$ in \eqref{eq:svd}, which contains information from the data (through $\mathbf{F}_d$), parameter (through $\bCpr$),  and QoI (through $\bP$), as can be seen from \eqref{eq:Hdrho}. In particular, the most informative sensors can be revealed by the rows of $U_k$ with the largest norms, or the leverage scores of $\bH_d^{\rho}$ \cite{BoutsidisMahoneyDrineas08}. More specifically, given a budget of selecting $r$ sensors from $d$ candidate locations, we initialize the candidate set $S^0 = \{s_1, \dots, s_r\}$ such that $s_i$, $i = 1, \dots, r$, is the row index corresponding to the $i$-th largest row norm of $U_k$, i.e., 
\begin{equation}\label{eq:InitS}
	s_i = \argmax_{s \in S \setminus S_{i-1}} ||U_k(s,:)||_2, \quad i = 1, \dots, r,
\end{equation}
where $U_k(s,:)$ is the $s$-th row of $U_k$, $||\cdot||_2$ is the Euclidean norm, and the set $S_{i-1} = \{s_1, \dots, s_{i-1}\}$ for $i = 2, 3, \dots,$ and $S_0 = \emptyset$. 
 Then at each step of a loop for $t = 1, \dots, r$, we swap a sensor $s_t$ from the current chosen sensor set $S^{t-1}$ with one from the candidate set such that the approximate EIG $\hat{\Psi}^\rho(W)$ evaluated as in \eqref{eq:Psihat} can be maximized, i.e., we choose $s^*$ such that
 \begin{equation}\label{eq:UpdateS}
 	 s^* = \argmax_{s \in \{ s_t\} \cup (S \setminus S^{t-1})}\hat{ \Psi }^\rho(W_s),
 \end{equation}
where $W_s$ is the design matrix corresponding to the sensor choice $S^{t-1}  \setminus  \{ s_t\}\cup \{ s \}$. We repeat the loop until a convergence criterion is met, e.g., the chosen $S$ does not change or the difference of the approximate EIG is smaller than a given tolerance. We summarize the swapping algorithm algorithm in 
\cref{alg:swapping}.

\begin{algorithm}[H]
	\small
	\caption{A swapping greedy algorithm for GOOED}
	\label{alg:swapping}
	\begin{algorithmic}[1]
		\STATE {\bfseries Input}: low-rank approximations \eqref{eq:svd}, a set $S = \{1, \dots, d\}$ of $d$ candidate sensors, a budget of $r$ sensors to be placed.
		\STATE {\bfseries Output}: the optimal sensor set $S^*$ with $r$ sensors. 
		\STATE Initialize $S^* =\{s_1,\dots,s_r\} \subset S$ according to \eqref{eq:InitS}. 
		\STATE Set $S^0 = \{ \emptyset\}$.
		\WHILE{$S^* \neq S^0$}
		\STATE $S^0 \leftarrow S^*$.
		\FOR{$t = 1,\dots,r$}
		\STATE Choose $s^*$ according to \eqref{eq:UpdateS}.
		\STATE Update $S^t \leftarrow (S^{t-1}  \setminus  \{ s_t\})\cup \{ s^*\}$.
		\ENDFOR
		\STATE Update $S^* \leftarrow S^r$.
		\ENDWHILE
		\STATE {\bfseries Output:} optimal sensor choice $S^*$.
	\end{algorithmic}
\end{algorithm}
%

\section{Experiments}
\label{sec:experiments}
In this section, we present results of numerical experiments for GOOED governed by a linear dynamical PDE model with infinite-dimensional parameter field and varying numbers of candidate sensors. This problem features the key challenges of (1) expensive model evaluation and (2) high-dimensional parameters and data.
\subsection{Model settings}
\label{sec:model-setting}
We consider sensor placement for Bayesian inversion of a contaminant source with the goal of maximizing information gain for contaminant concentration on some building surfaces. The transport of the contaminant can be modeled by the time-dependent advection-diffusion equation with homogeneous Neumann boundary condition, 
\begin{equation}\label{eq:advection-diffusion}
\begin{split}
u_t - k \Delta u + \bs{v} \cdot \nabla u &= 0 \text{ in } \mathcal{D} \times (0,T),\\
u(\cdot, 0) & = m \text{ in } \mathcal{D} ,\\
k \nabla u \cdot n & = 0 \text{ on } \partial\mathcal{D} \times (0,T),
\end{split}
\end{equation}
where $k=0.001$ is the diffusion coefficient and $T > 0$ is the final time. The domain $\mathcal{D} \subset \R^2$ is open and bounded with boundary $\partial \mathcal{D}$ depicted in \cref{fig:velocity}. The initial condition $m$ is an infinite-dimensional random parameter field in $\mathcal{D}$, which is to be inferred.  The velocity field $\bs{v} \in \R^2$ is obtained as the solution of the steady-state Navier--Stokes equations with Dirichlet boundary condition,
 \begin{equation}\label{eq:Navier-Stokes}
 \begin{split}
-\frac{1}{\text{Re}} \Delta \bs{v} + \nabla q + \bs{v} \cdot \nabla \bs{v} & = 0 \text{ in } \mathcal{D} ,\\
\nabla \cdot \bs{v} & = 0 \text{ in } \mathcal{D} ,\\
\bs{v} & = \bs{g} \text{ on } \partial\mathcal{D},
 \end{split}
\end{equation}
where $q$ represents the pressure field and the Reynolds number $\text{Re} = 50$. The Dirichlet boundary data $\bs{g} \in \R^2$ are prescribed as $\bs{g} = (0, 1)$ on the left wall of the domain, $\bs{g} = (0, -1)$ on the right wall, and $\bs{g} = (0, 0)$ elsewhere.
\begin{figure}[tbhp]
\footnotesize{
  \centering
  \subfloat[velocity field $\bs{v}$]{\label{fig:v}\includegraphics[width=0.25\linewidth]{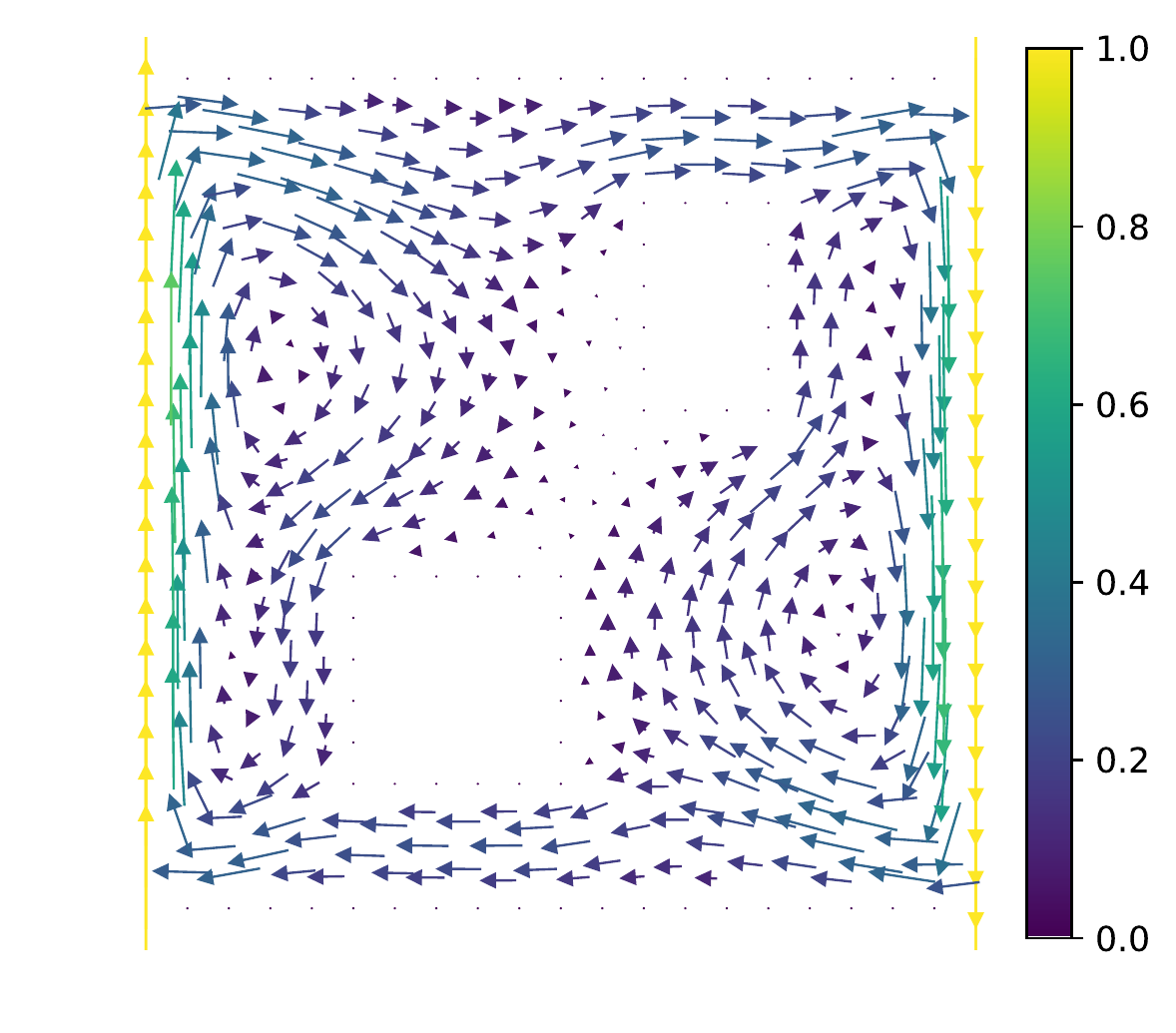}}
  \subfloat[initial condition $m_{\text{true}}$]{\label{fig:init_cond}\includegraphics[width=0.25\linewidth]{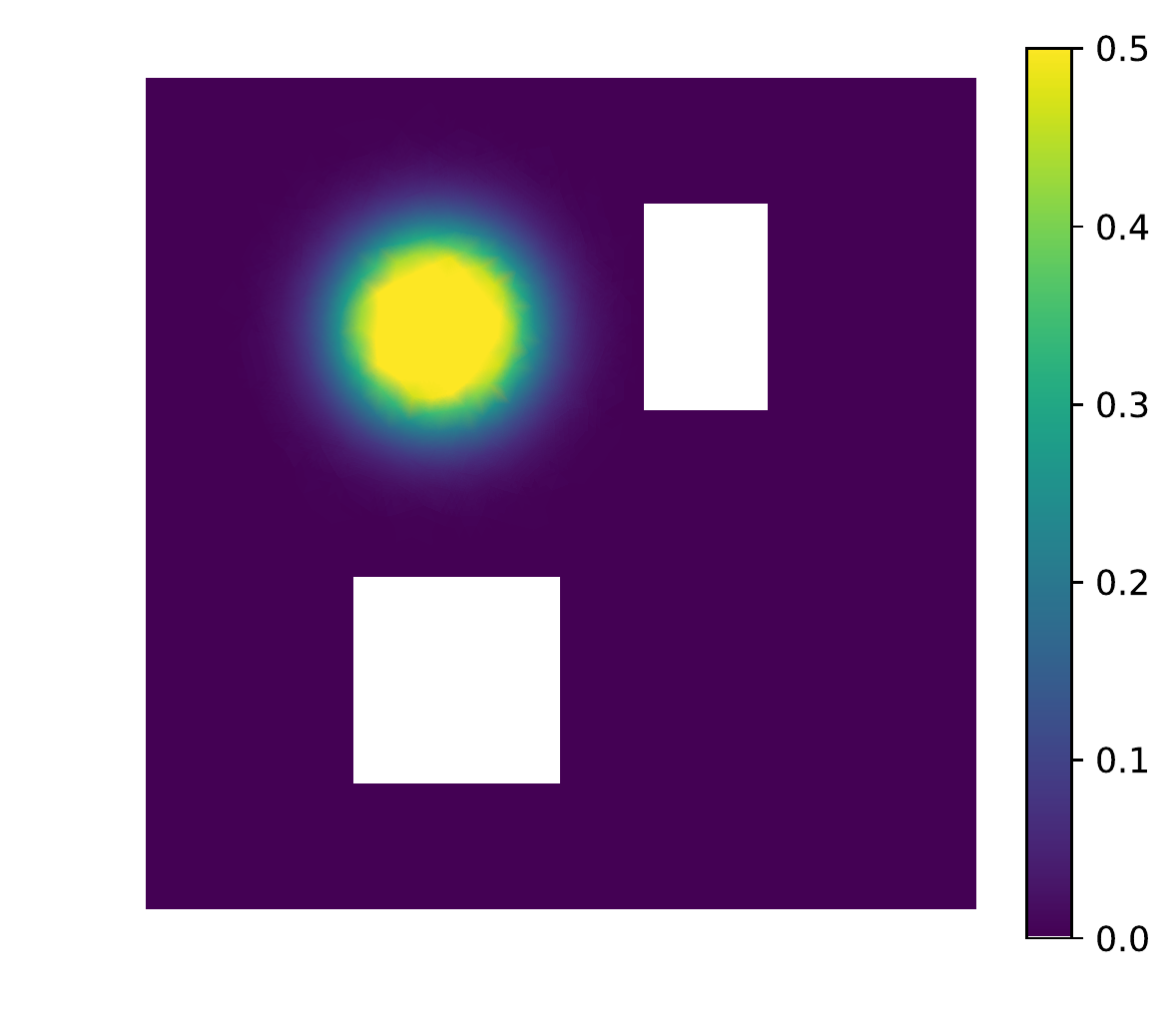}}
    \subfloat[9 candidates]{\label{fig:obs9}\includegraphics[width=0.225\linewidth]{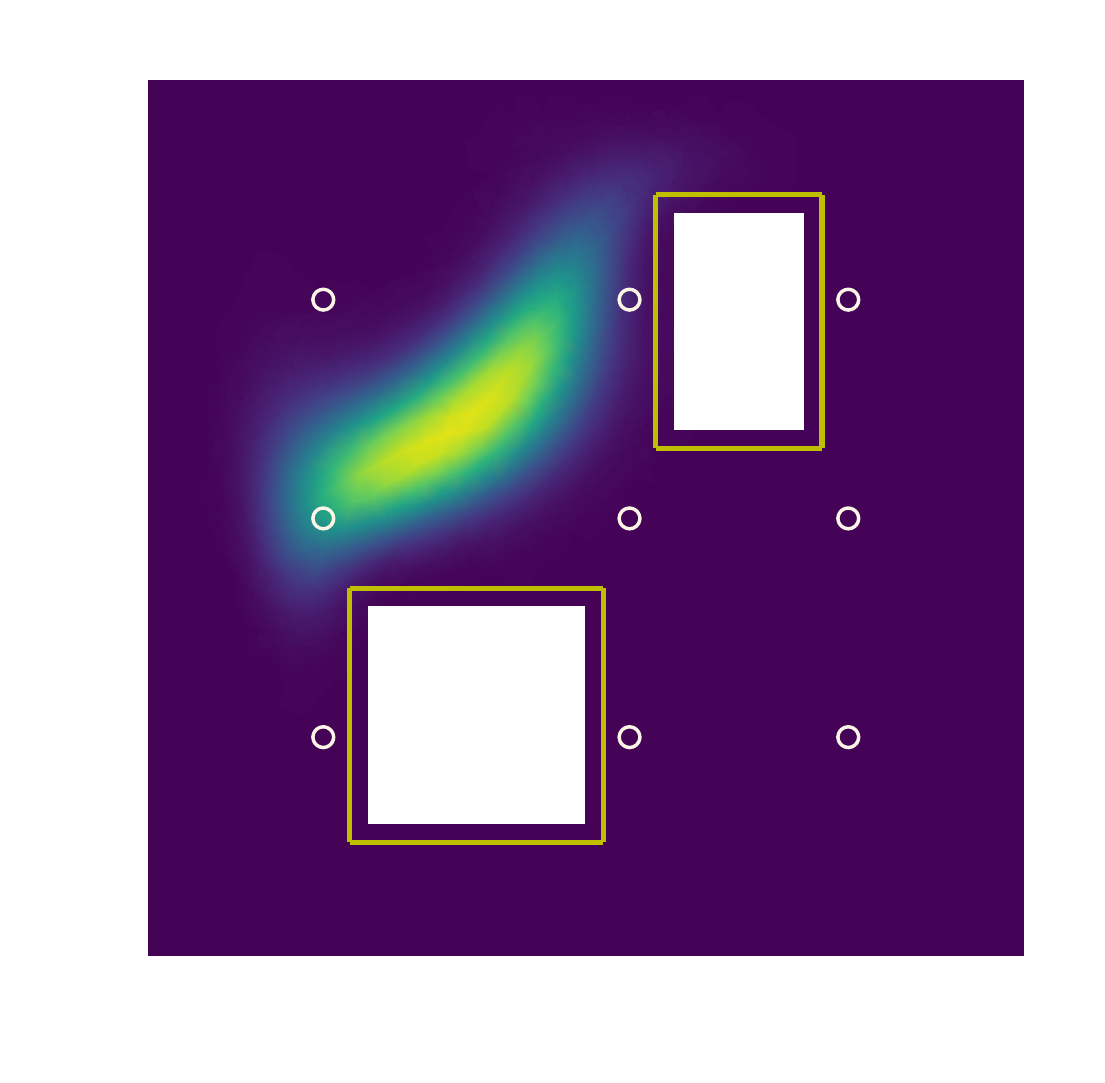}}
  \subfloat[75 candidates]{\label{fig:obs75}\includegraphics[width=0.225\linewidth]{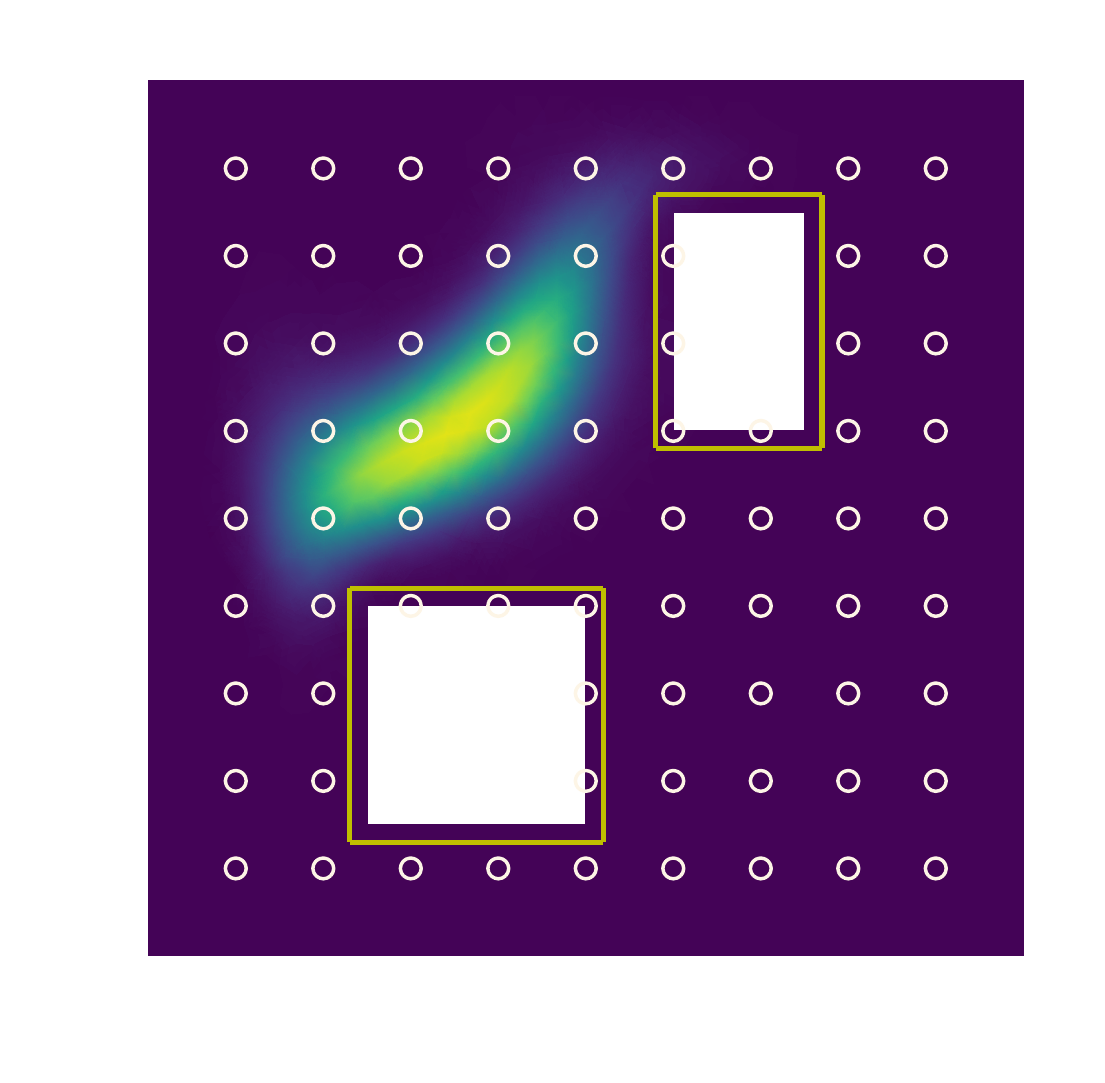}}
  \caption{The domain $\mathcal{D}$ is $[0,1]^2$ with two rectangular blocks ($[0.25,0.5]\times[0.15,0.4], [0.6,0.75]\times[0.6,0.85]$) removed. Data of contaminant concentration at time $T = 0.8$, obtained as the solution of \eqref{eq:advection-diffusion} at the initial condition as shown. The QoI maps ($\bP_1,\bP_2,\bP_3$) are the averaged solution within the lines along the left, right, and both buildings. Candidate sensor locations are shown in circles.}
  \label{fig:velocity}
  }
  \vspace{-0.2cm}
\end{figure}
We consider a Gaussian prior for the parameter $m \sim \mathcal{N}(\ipar_{\text{pr}}, \mathcal{C}_{\text{pr}})$ with mean $\ipar_{\text{pr}}$ and covariance operator $\mathcal{C}_{\text{pr}} = \mathcal{A}^{-2}$, where the elliptic operator $\mathcal{A} = -\gamma \Delta + \delta I$ (with Laplacian $\Delta$ and identity $I$) is equipped with Robin boundary condition $\gamma \nabla m \cdot \mathbf{n} + \beta m$ on $\partial \mathcal{D}$. Here $\gamma, \delta > 0$ control the correlation length and variance of $m$ \cite{DaonStadler18}. In our numerical test, we set $\prmean = 0.25$, $\gamma = 1, \delta = 8$. We synthesize a ``true" initial condition $m_{\text{true}} = \min(0.5, \exp(-100\norm{x-[0.35,0.7]}^2)$ as the contaminant source (\cref{fig:init_cond}). To solve the PDE model, we use an implicit Euler method for temporal discretization with $N_t$ time steps, and a finite element method for spatial discretization,  resulting in a  $d_m$-dimensional discrete parameter $\bipar \sim \mathcal{N}(\bipar_{\text{pr}}, \bCpr)$, with  $\bipar_{\text{pr}}, \bCpr$ denoting finite element discretizations of $\ipar_{\text{pr}}, \mathcal{C}_{\text{pr}}$, respectively.

The solution of the PDE for $d_m = 2023$ and $N_t = 40$ at the observation time $T = 0.8$ and $d$ candidate sensor locations are also shown in \cref{fig:obs9} and \cref{fig:obs75}, at which we observe the contaminant concentration $u$. The linear map $\bF$ is defined by the predicted data, i.e., the concentrations at the selected sensors. Finally, we take the QoI as an averaged contaminant concentration at time $t_{\text{pred}}$ within a distance $\delta = 0.02$ from the boundaries of either the left, the right, or both buildings, with corresponding QoI maps denoted as $\bP_1,\bP_2,\bP_3$ (see  \cref{fig:obs9} and \cref{fig:obs75}). 

\subsection{Numerical results}
We first consider the case of a small number of candidate sensors, for which we can use exhaustive search to find the optimal sensor combination and compare it with the sensors chosen by the standard and swapping greedy algorithms. Specifically, we use a grid of $d = 9$ candidate locations $\{s_i\}_{i=0}^9$ ($x_i \in \{0.2,0.55,0.8 \}\times \{0.25,0.5,0.75 \}$) as shown in \cref{fig:obs} (left) with the goal of choosing $r = 2, 3, 4, 5, 6, 7, 8$ sensors for the QoI prediction time $t_{\text{pred}} = 1.0$. We compute the matrices $\bH_d^{\rho}$ and $\Delta \bH_d$ (of size $9\times 9$) without low-rank approximation since they are small.
\begin{figure}[ht]
  \centering
 \subfloat[{\small $\bP_1$}]{\label{fig:11}\includegraphics[width=0.33\linewidth]{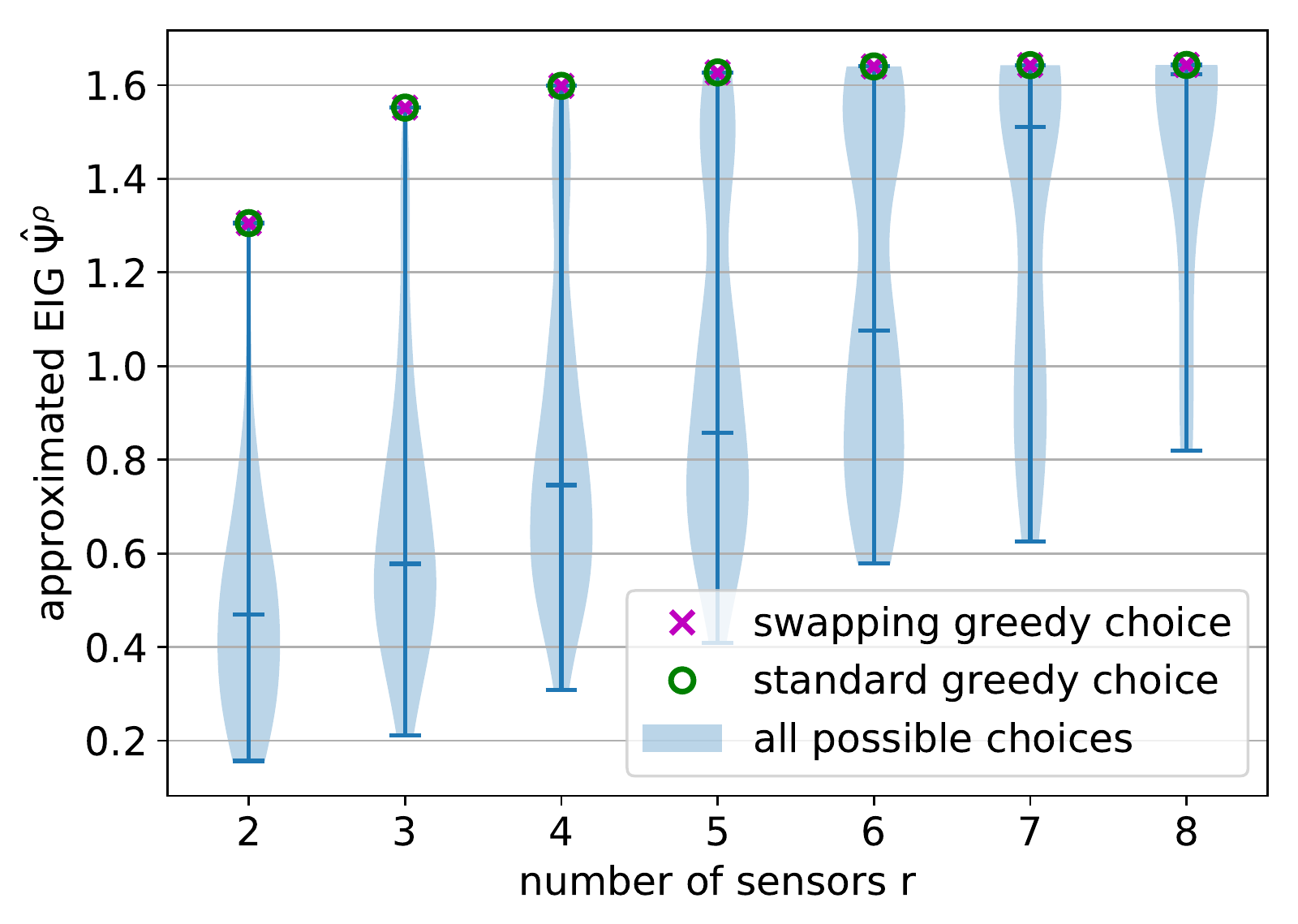}}
  \subfloat[{\small $\bP_2$}]{\label{fig:12}\includegraphics[width=0.33\linewidth]{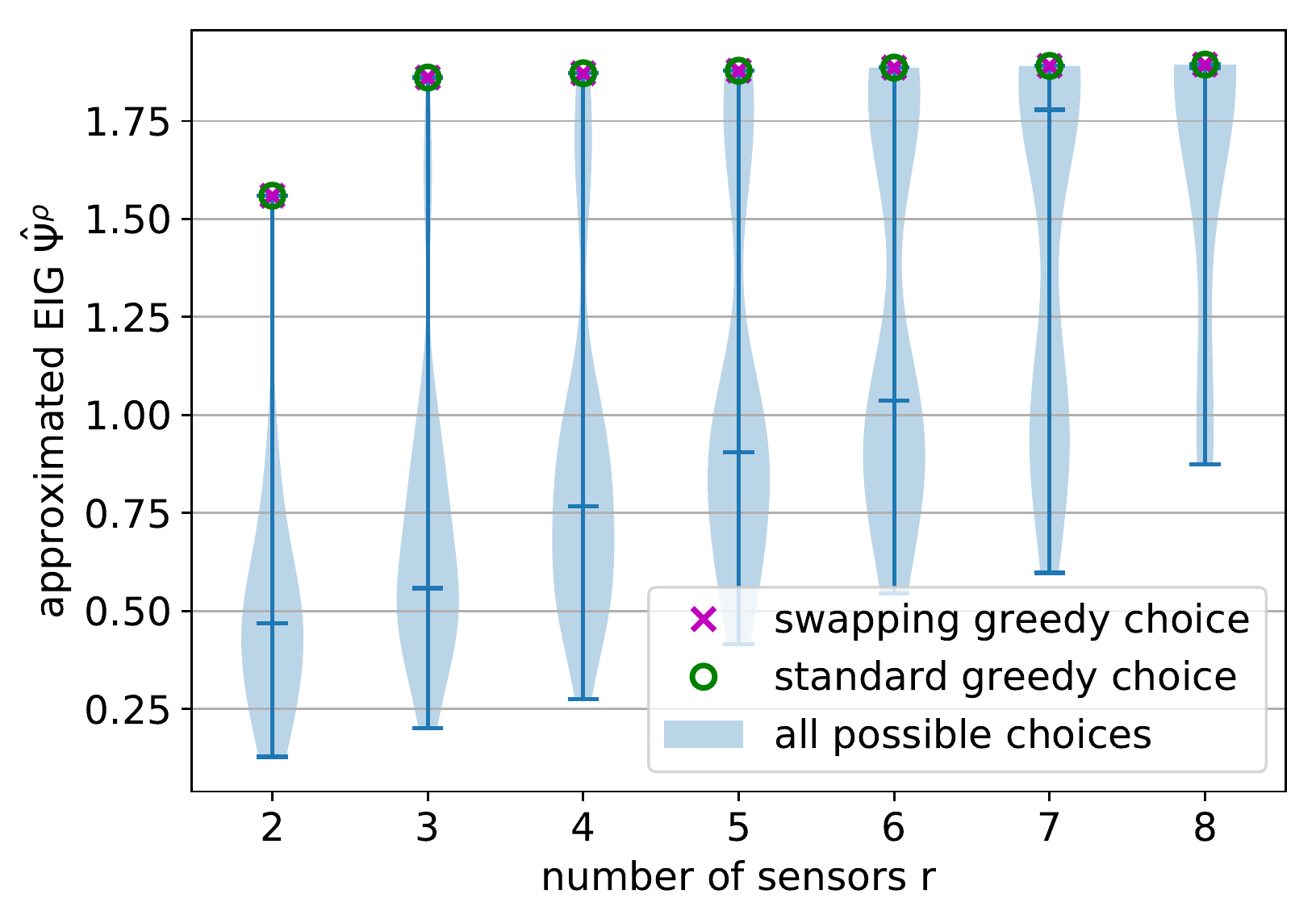}}
 \subfloat[{\small $\bP_3$}]{\label{fig:13}\includegraphics[width=0.33\linewidth]{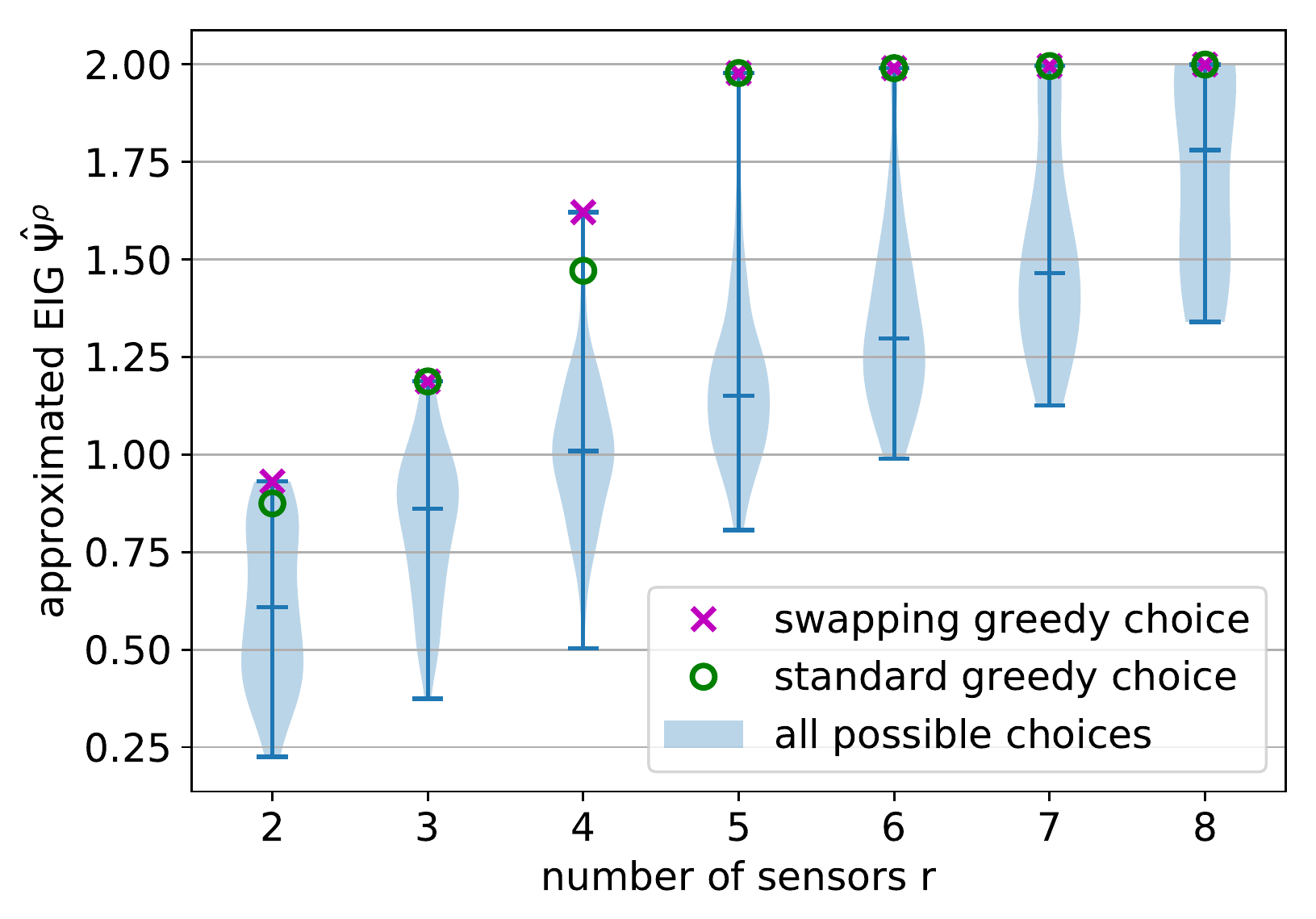}}
\caption{\small Approximate EIG $\hat{\Psi}^{\rho}$ at $r$ sensors chosen by the standard and swapping greedy algorithms, and the distribution of $\hat{\Psi}^{\rho}$ at all possible combinations of $9$ candidate sensors. The three plots are for the QoI maps $\bP_1,\bP_2$, and $\bP_3$. }
\label{fig:r9}

\end{figure}

We can see from \cref{fig:r9} that for QoI maps $\bP_1$ and $\bP_2$, both greedy algorithms find the optimal design, while for $\bP_3$ with $r=2,4$, only swapping greedy finds the optimal design. Moreover, an increase in $r$ leads to diminishing returns, as the gain in information about the QoI from additional sensors saturates. We see that $\sim$3 sensors is sufficient for either building, whereas 5 is sufficient for both. 

Next we consider the case of the 75 candidate sensors depicted in \cref{fig:obs} (right). Exhaustive search across all sensor combinations is not feasible in this case; instead, we compare the best EIG from $200$ random designs with those obtained by the greedy algorithms. We seek the $r$ optimal sensors, $r = 5,10,15,20,25,30,40,50,60$, from among the 75 candidates. Results are shown in \cref{fig:r75}.
\begin{figure}[ht]
\vskip 0.2in
  \centering
 \subfloat[{\small $\bP_1$}]{\label{fig:21}\includegraphics[width=0.33\linewidth]{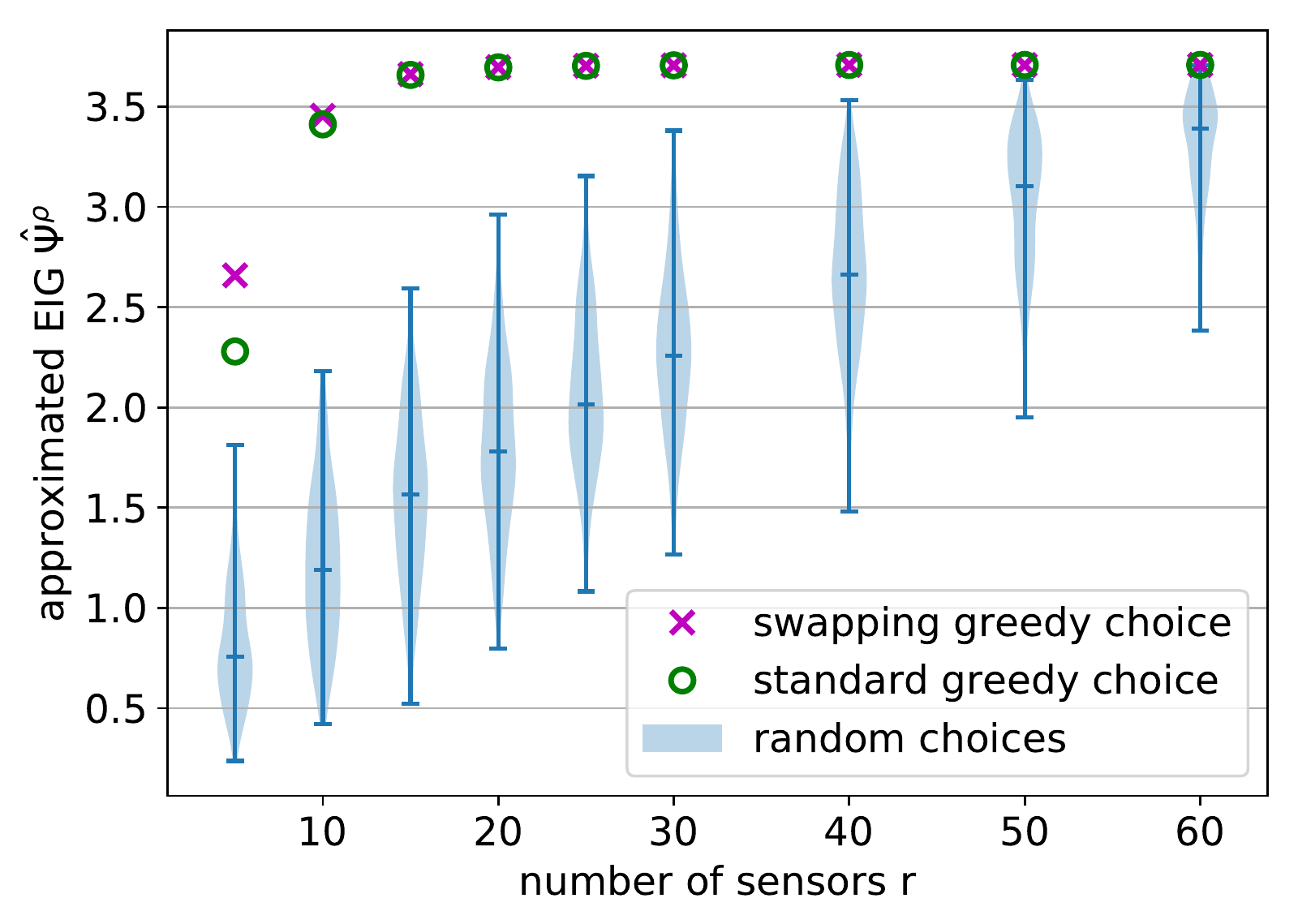}}
  \subfloat[{\small $\bP_2$}]{\label{fig:22}\includegraphics[width=0.33\linewidth]{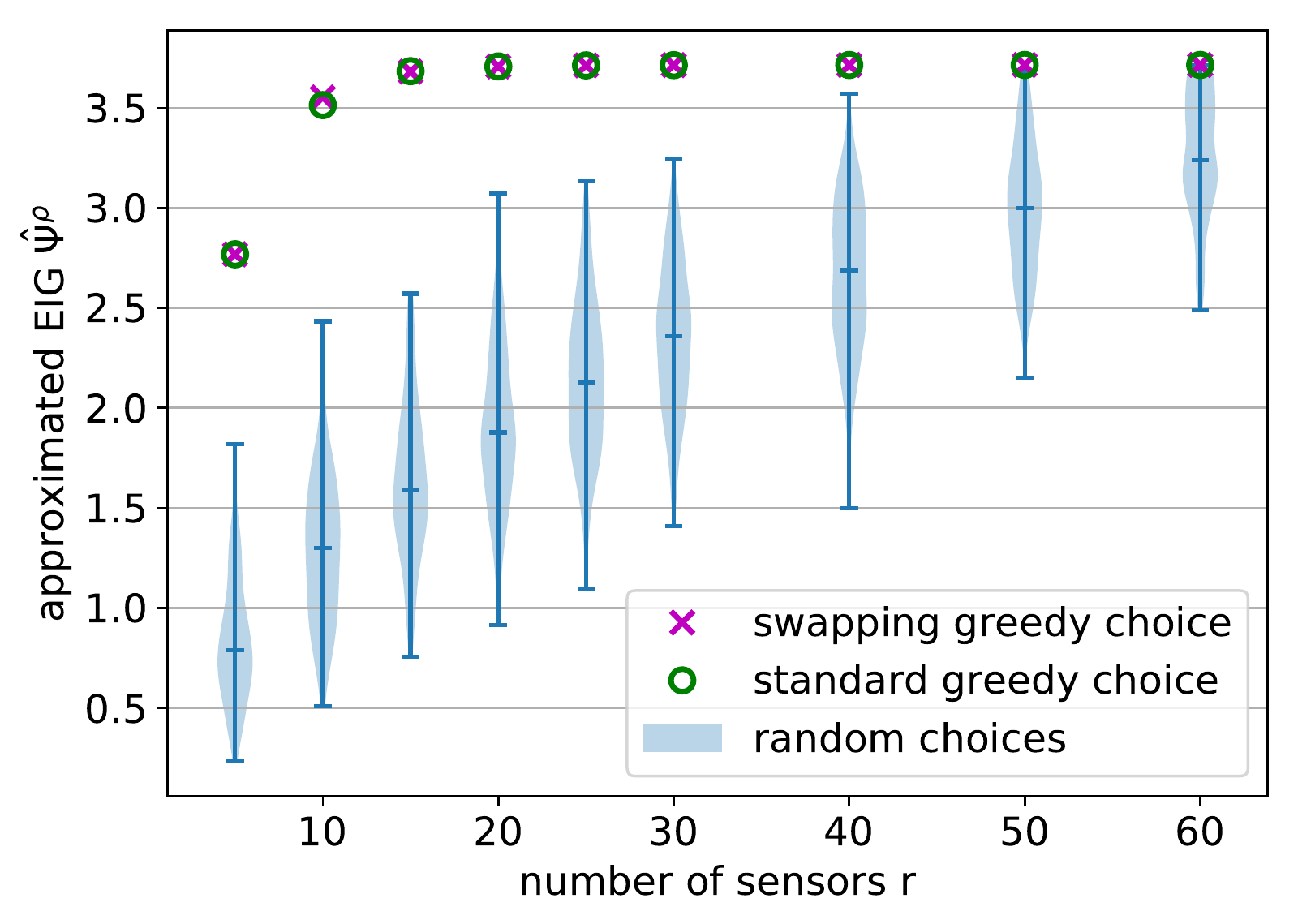}}
 \subfloat[{\small $\bP_3$}]{\label{fig:23}\includegraphics[width=0.33\linewidth]{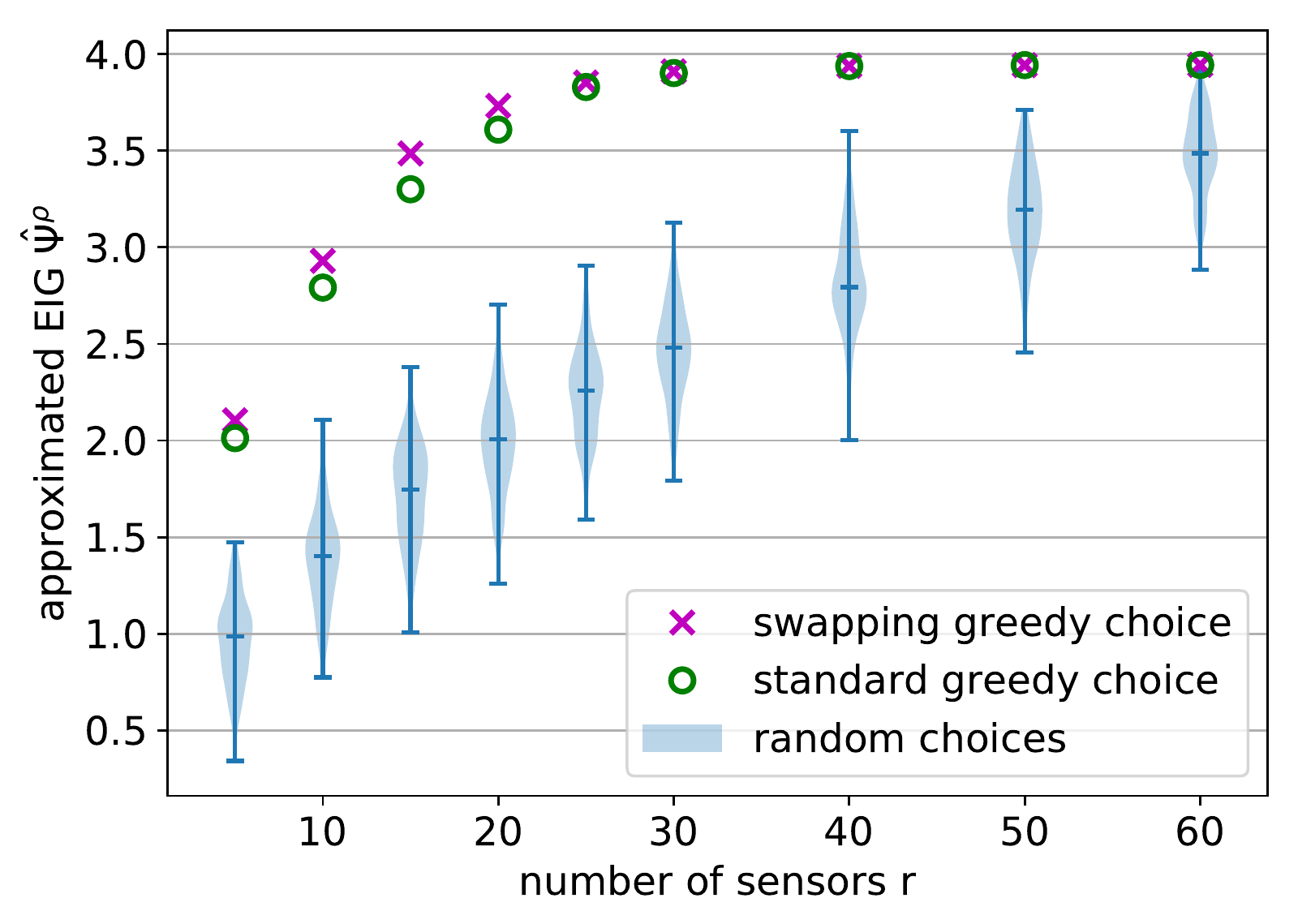}}
\caption{\small Approximate EIG $\hat{\Psi}^{\rho}$ for $r$ out of 75 sensors, found by the standard and swapping greedy algorithms, compared with the distribution of $\hat{\Psi}^{\rho}$  for 200 randomly-chosen sets from the $75$. The three plots are for the QoI maps $\bP_1,\bP_2$ and $\bP_3$ . 
}
\label{fig:r75}

\end{figure}
We see that both greedy algorithms find designs with larger EIG than all random choices. Moreover, for small $r$, the swapping greedy algorithm finds better designs than the standard greedy. For large $r$, both greedy algorithms can find designs with similar EIG. In fact, multiple designs with similar EIG become more likely with larger $r$. 

To demonstrate the reduction of computational cost achieved by the offline-online decomposition, we report the total number of EIG evaluations, the number of swapping loops, and the number of swaps of the swapping greedy algorithm (\cref{alg:swapping}) in \cref{table:1} for $75$ candidate sensors with different target number of sensors. We see that the number of loops at convergence is mostly $3$. We observe in the experiments that most of the swaps take place in the first loop, followed by a smaller number of swaps in the second loop resulting in slight sensor adjustments. There are no swaps in the last loop, which we require as a convergence  criterion. As a result of the offline-online decomposition \cref{eq:OffOn}, which relieves the (thousands of) EIG evaluations of expensive PDE solves once the low-rank approximation \eqref{eq:svd} is built, we achieve over 1000X speedup. This is because the PDE solves overwhelmingly dominate the overall cost, and because the offline decomposition is computed at a cost comparable to one direct EIG evaluation by \eqref{eq:EIGz}. 

\begin{table}[t]
\caption{\small Number of swapping loops (\#LOOPS), swaps (\# SWAPS), and EIG evaluations (\# EIG EVAL) for different numbers of $r$ selected sensors out of 75 candidates. Results are reported for \cref{alg:swapping} for the goal $\bP_1$.}
\label{table:1}
\vskip -0.5in
\begin{center}
\begin{small}
\begin{sc}
\begin{tabular}{|c|c|c|c|c|c|}
\toprule
$r$ & $5$ & $10$ & $15$ & $20$ & $25$\\
\hline
\#loops & $3$ & $3$& $3$ & $3$ & $3$\\
\hline
\#swaps & $41$ & $73$ & $124$ & $164$ & $190$\\
\hline
\#EIG eval & $1050$ & $1950$ & $2700$ & $3300$ & $3750$ \\
\hline
$r$ & $30$ & $40$ & $50$ & $60$ &\\
\hline
\#loops & $2$ & $3$ & $3$ & $3$ &\\
\hline
\#swaps &  $194$ & $235$ & $199$& $119$ &\\
\hline
\#EIG eval &  $2700$ & $4200$ & $3750$& $2700$ &\\
\bottomrule
\end{tabular}
\end{sc}
\end{small}
\end{center}
\vskip -0.1in
\end{table}
\begin{figure}[ht]
\vskip 0.2in
  \centering
 \subfloat[{\small $\bP_1$ at $t_{\text{pred}}=1.$}]{\label{fig:31}\includegraphics[width=0.25\linewidth]{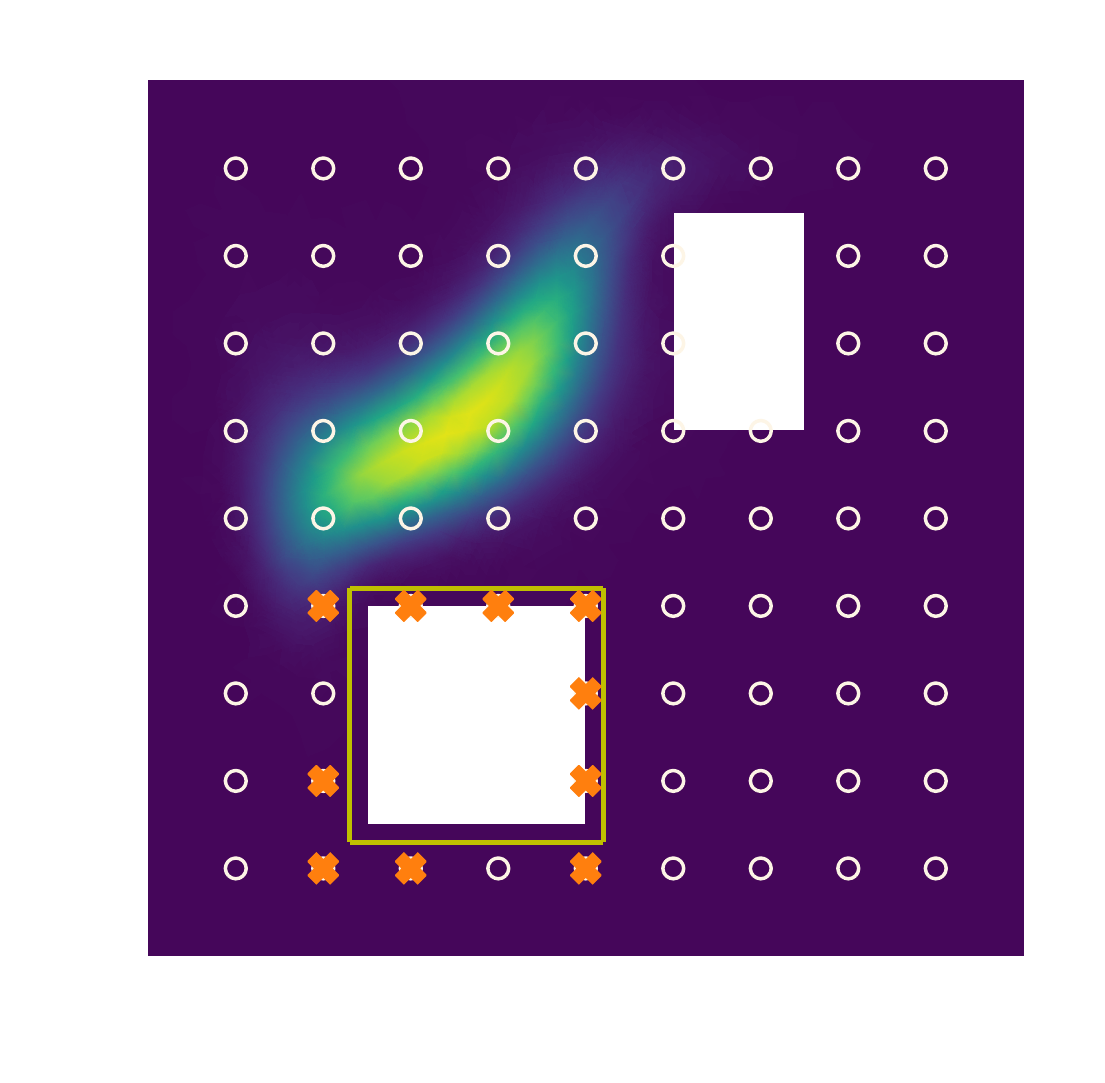}}
  \subfloat[{\small $\bP_2$ at $t_{\text{pred}}=1.$}]{\label{fig:32}\includegraphics[width=0.25\linewidth]{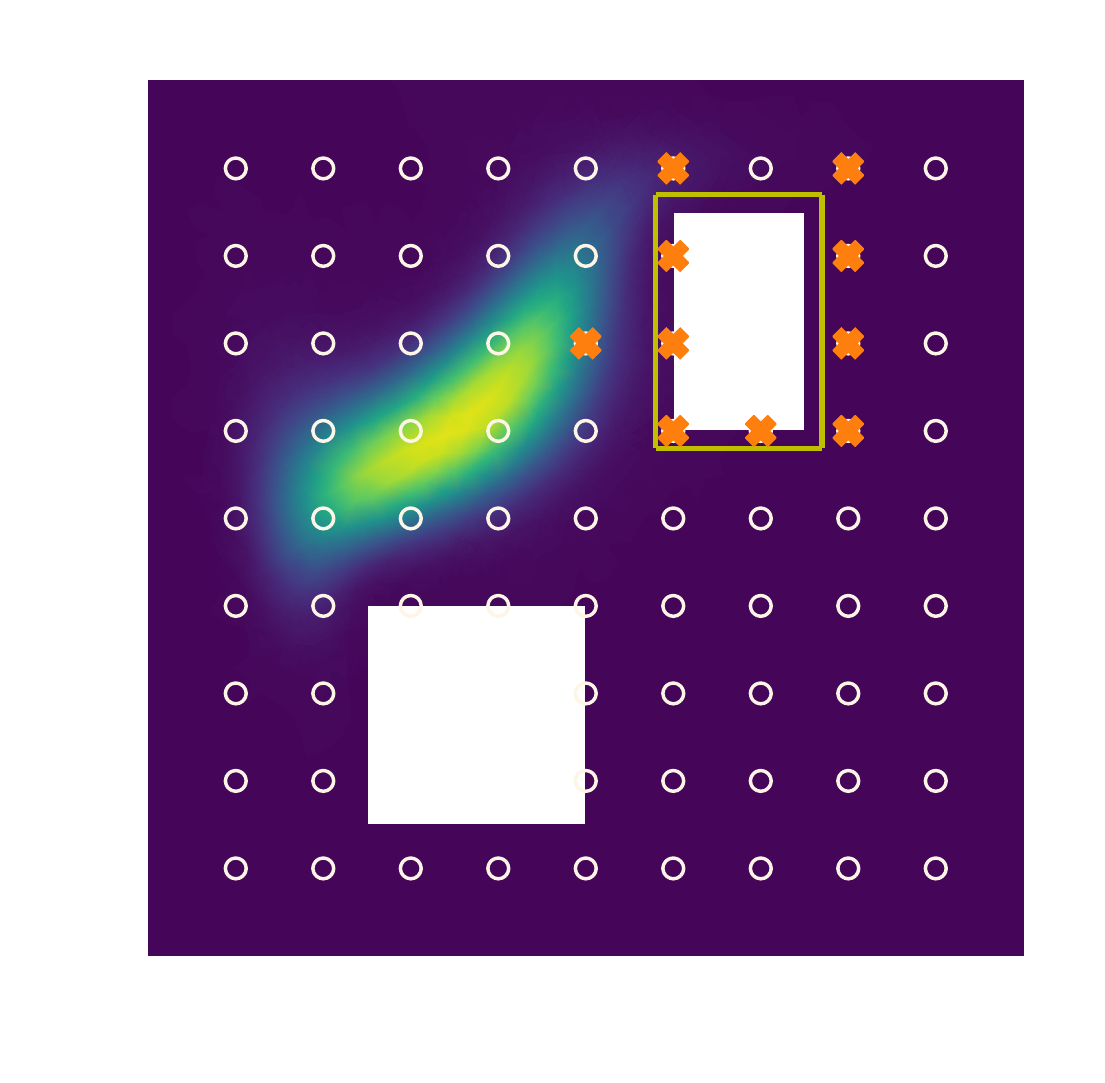}}
 \subfloat[{\small $\bP_3$ at $t_{\text{pred}}=1.$}]{\label{fig:33}\includegraphics[width=0.25\linewidth]{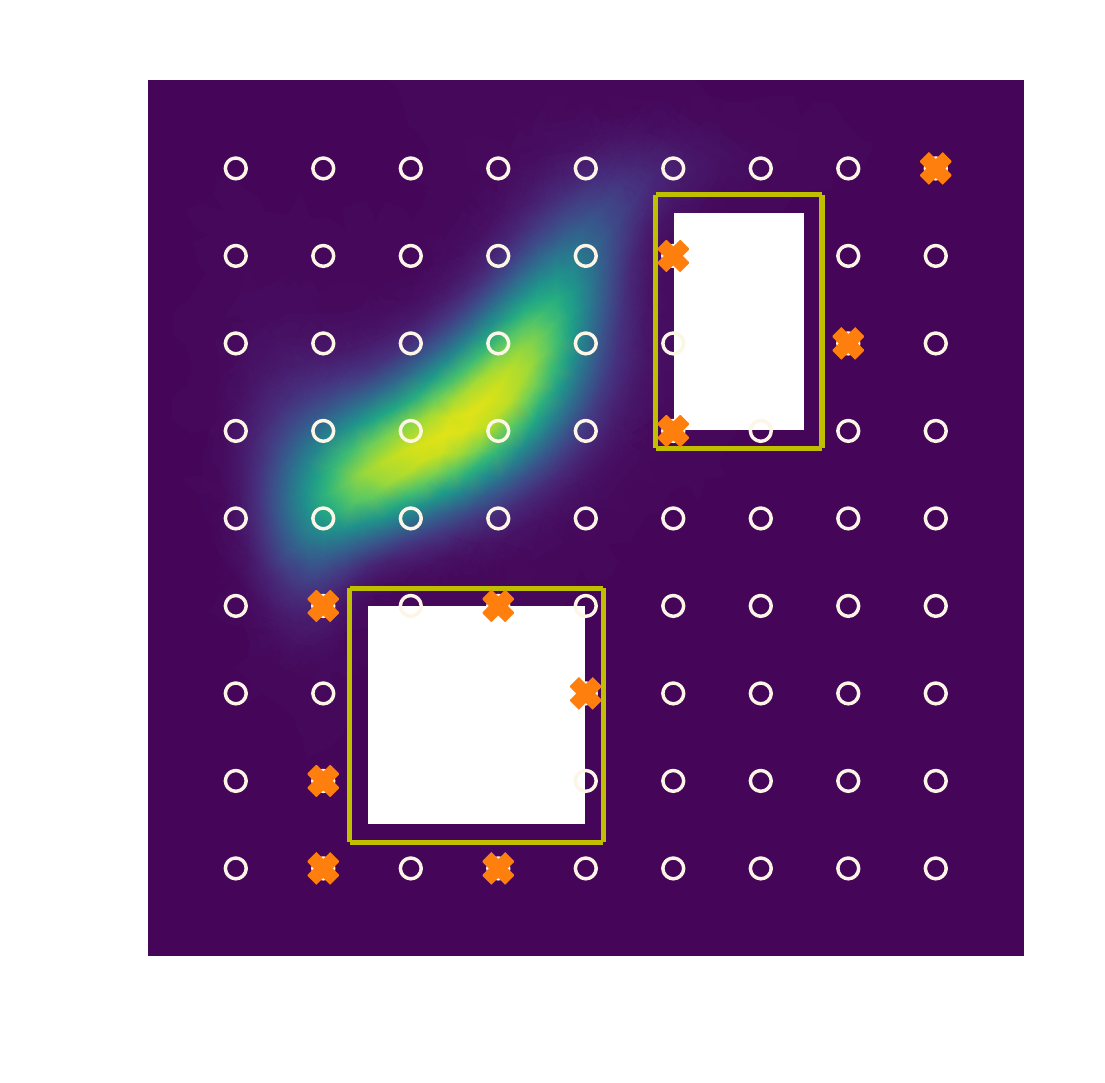}}\\
 \vspace{-0.3cm}
  \subfloat[{\small $\bP_1$ at $t_{\text{pred}}=2.$}]{\label{fig:34}\includegraphics[width=0.25\linewidth]{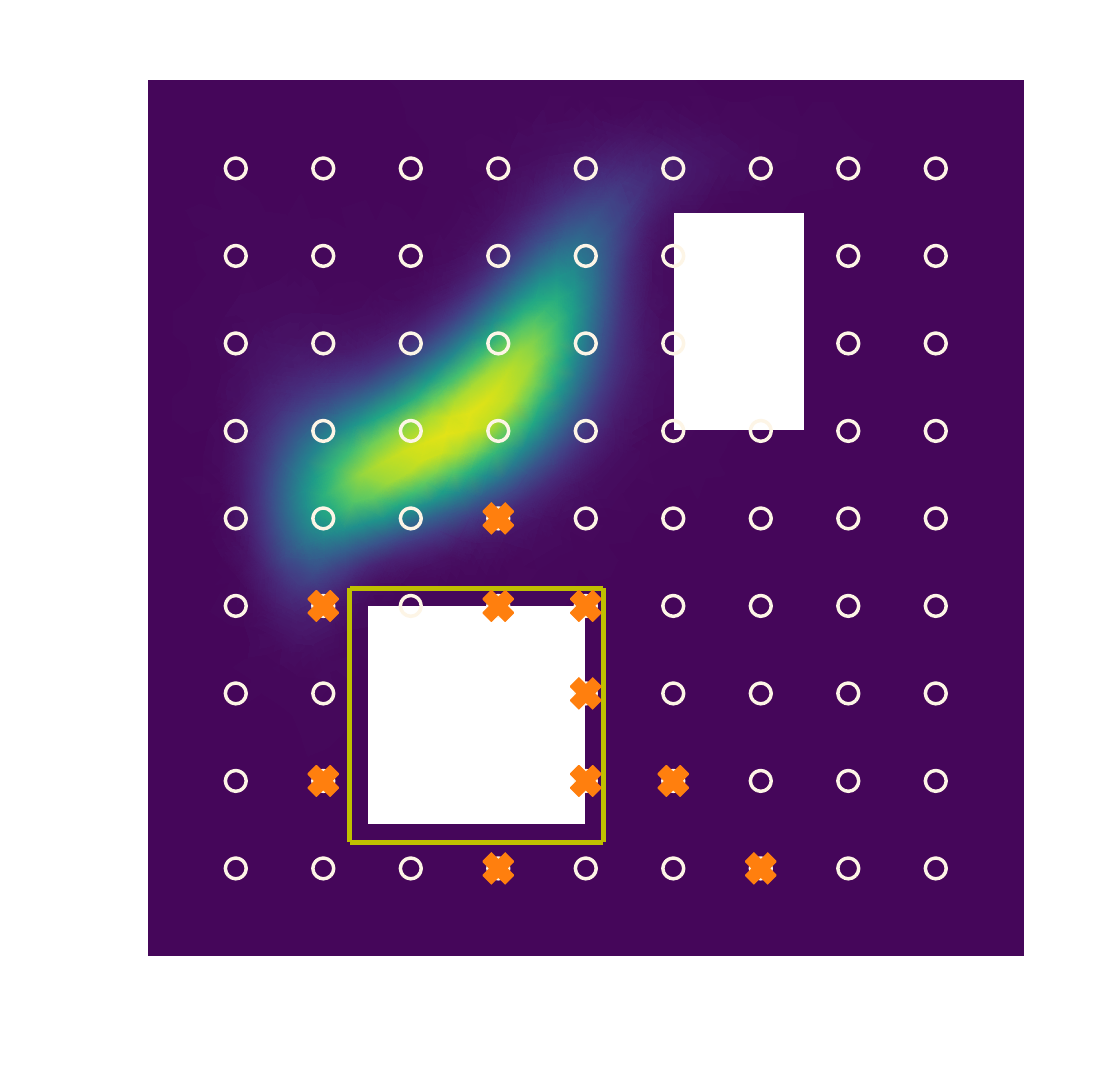}}
  \subfloat[{\small $\bP_1$ at $t_{\text{pred}}=4.$}]{\label{fig:35}\includegraphics[width=0.25\linewidth]{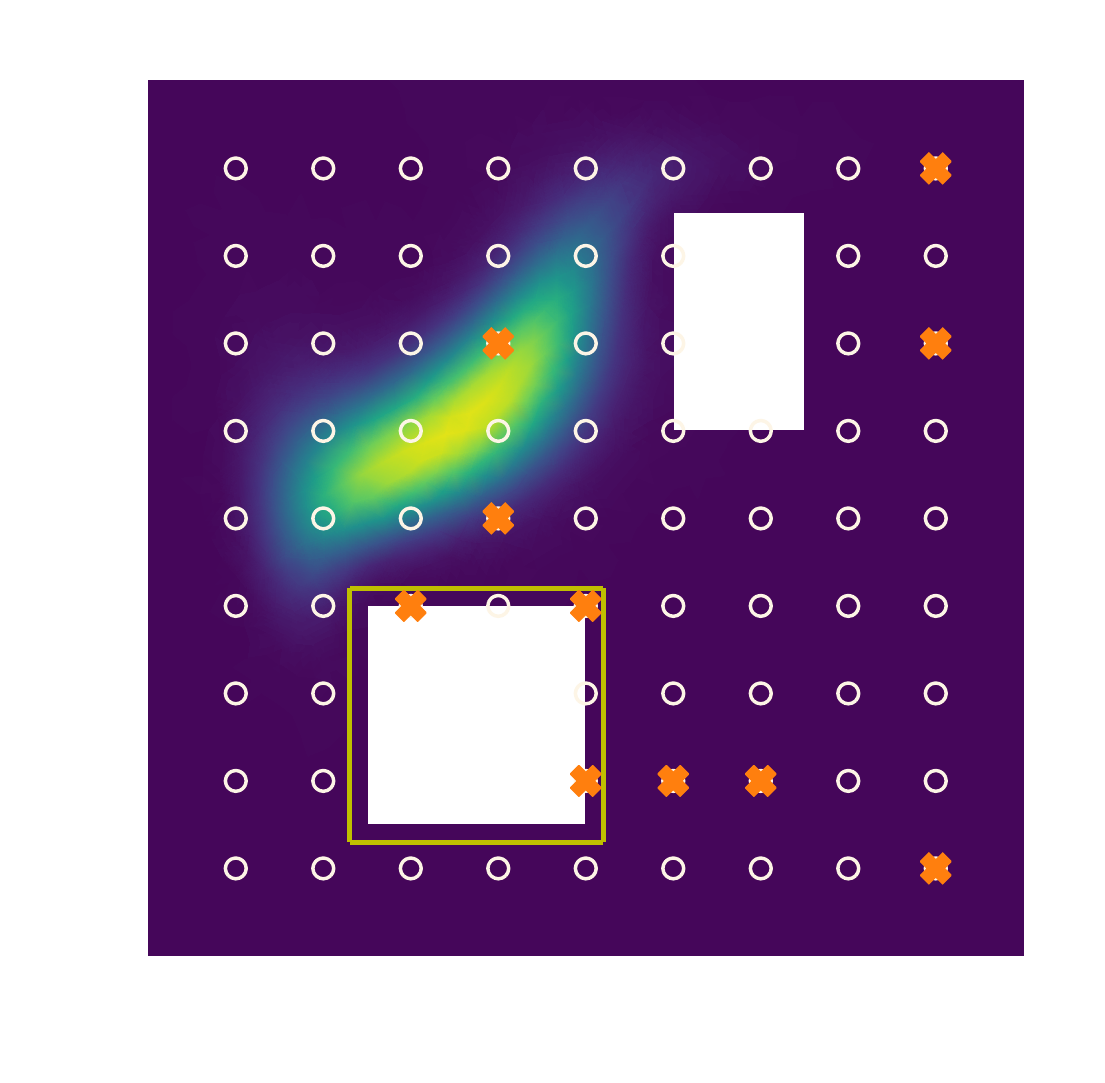}}
 \subfloat[{\small $\bP_1$ at $t_{\text{pred}}=8.$}]{\label{fig:36}\includegraphics[width=0.25\linewidth]{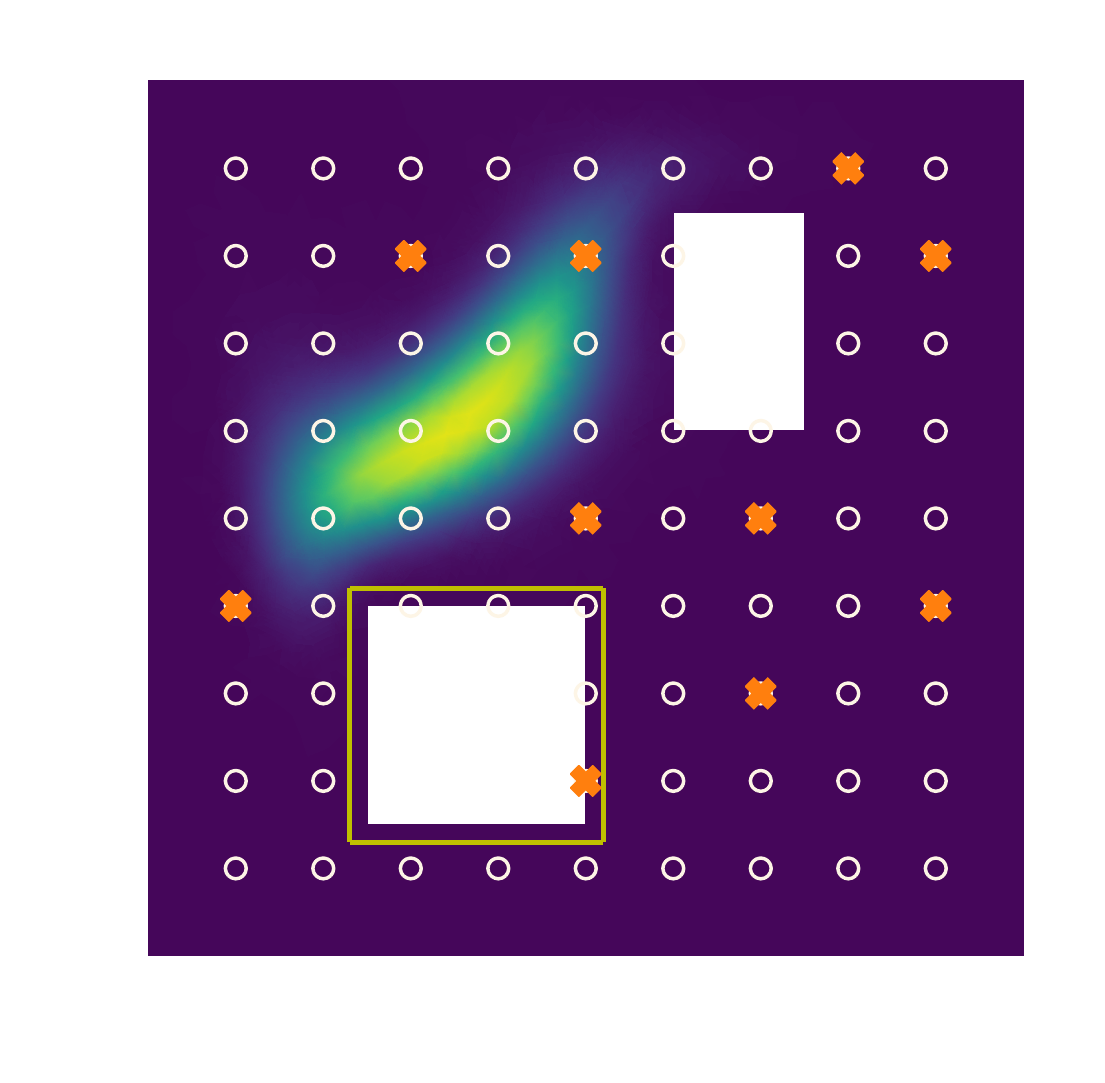}} 
\caption{\small Sensor locations chosen by the swapping greedy algorithm for $10$ out of $75$ candidates for the parameter-to-QoI maps $\bP_1,\bP_2,\bP_3$ at time $t_{\text{pred}}=1$ and also $\bP_1$ at time  $t_{\text{pred}}=2, 4, 8$.}
\label{fig:75t1}

\vskip -0.2in
\end{figure}

\Cref{fig:75t1} illustrates the effect of the goal of maximizing information gain for the QoIs from optimally placed sensors. Specifically, for the parameter-to-QoI maps $\bP_1, \bP_2, \bP_3$ that quantify the average contaminant concentration at time $t_{\text{pred}} = 1$ around  left,  right, and both blocks, the goal-oriented OED finds the sensors depicted 
in the first row. For $\bP_1$ at longer prediction times $t_{\text{pred}} = 1, 2, 4, 8$, we see in the bottom row of \cref{fig:75t1} that the optimal sensors are no longer placed in the immediate vicinity of the building, but instead are increasingly dispersed to better detect the now more diffused field. Finally, the ability of GOOED to reduce the posterior variance in the initial condition field is depicted in \cref{fig:post} for different goals $\bP_1,\bP_2,\bP_3$. Compared to a random design (lower right), the three optimal designs lead to lower variance surrounding regions of interest. 
\begin{figure}[ht]
        \centering
  \subfloat[{\small  optimal design for $\bP_1$}]{\label{fig:41}\includegraphics[width=0.24\linewidth]{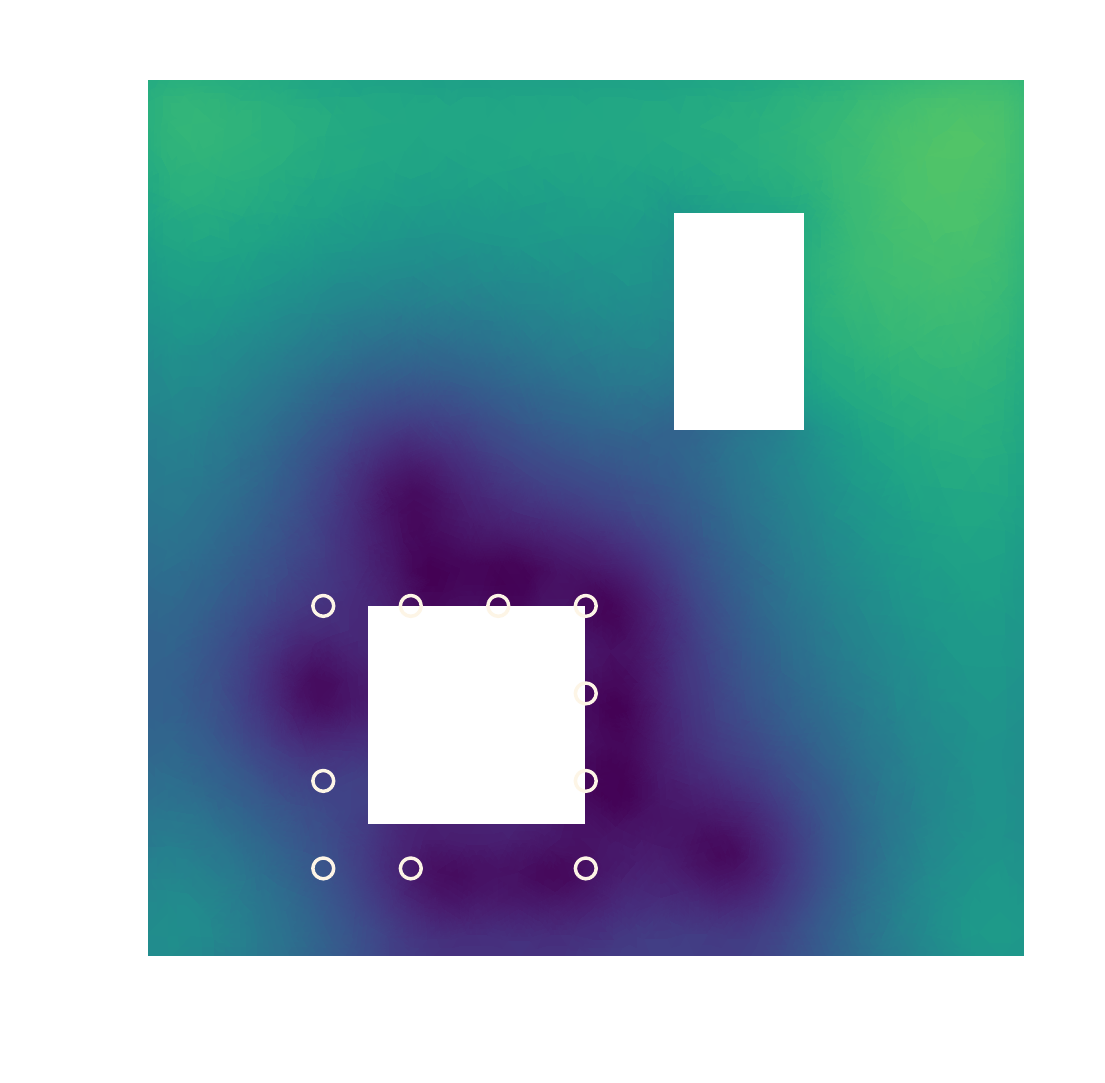}}
  \subfloat[{\small  optimal design for $\bP_2$}]{\label{fig:42}\includegraphics[width=0.24\linewidth]{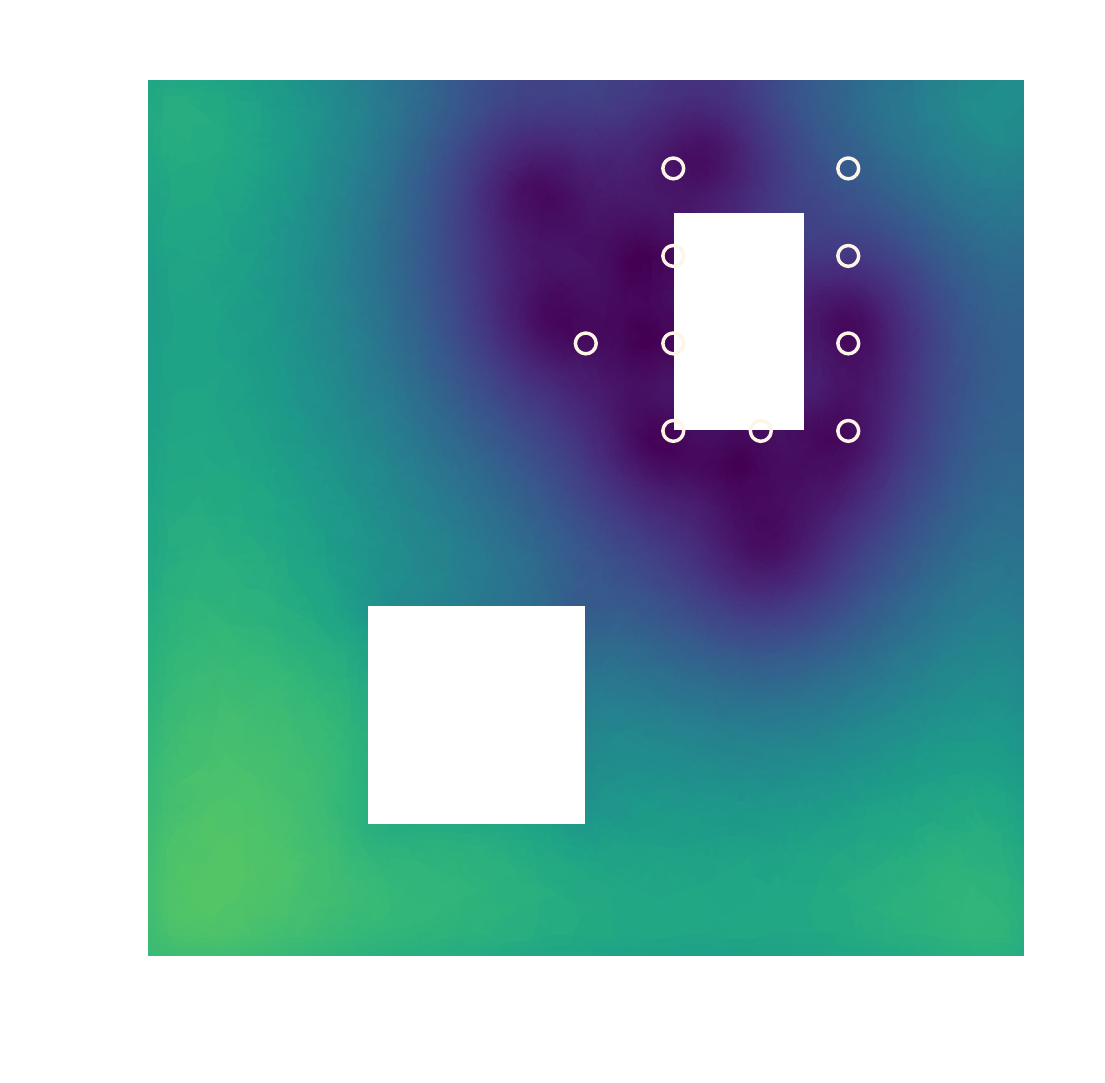}}
 \subfloat[{\small optimal design for $\bP_3$}]{\label{fig:43}\includegraphics[width=0.24\linewidth]{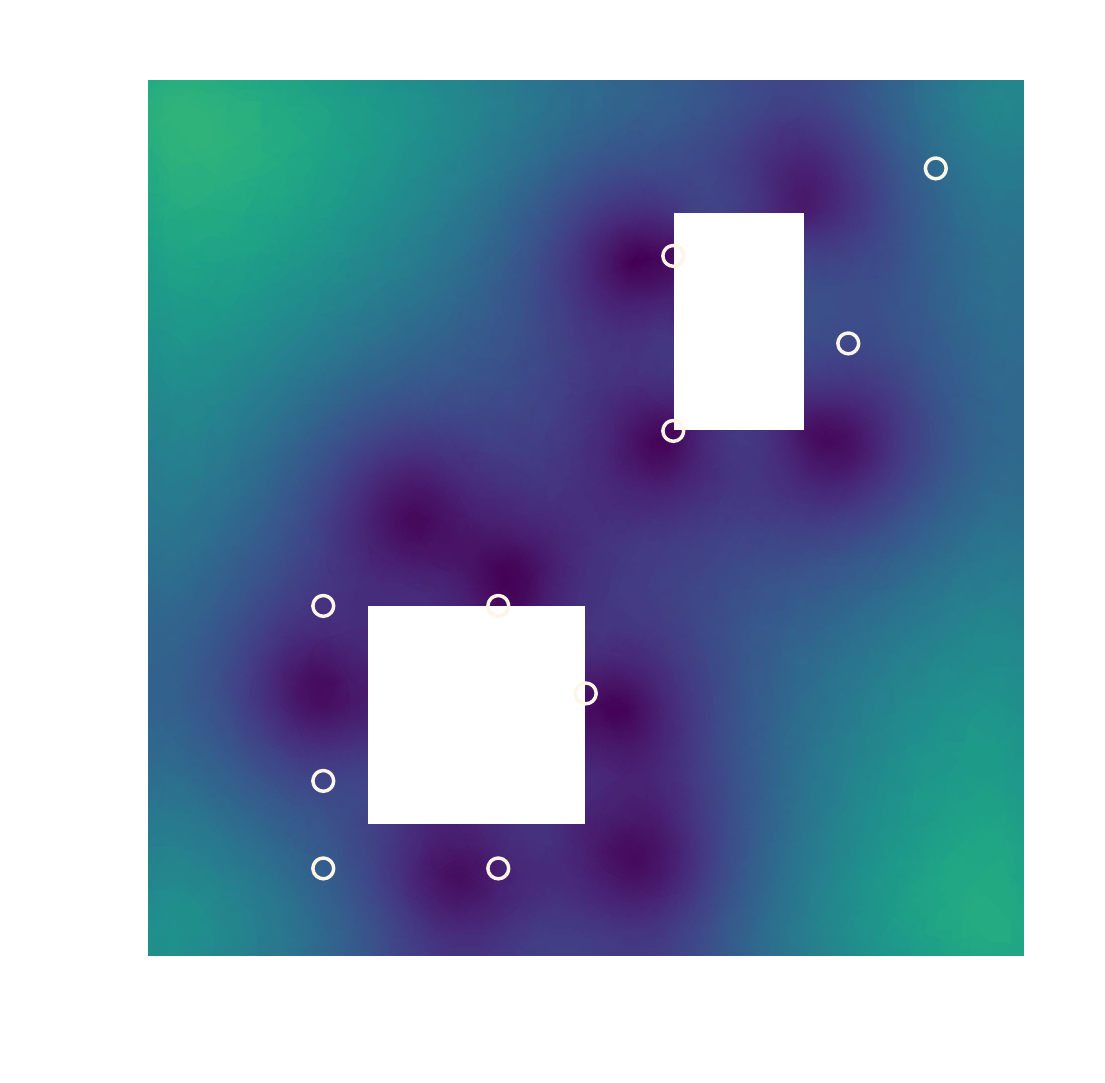}} 
  \subfloat[{\small random design}]{\label{fig:44}\includegraphics[width=0.24\linewidth]{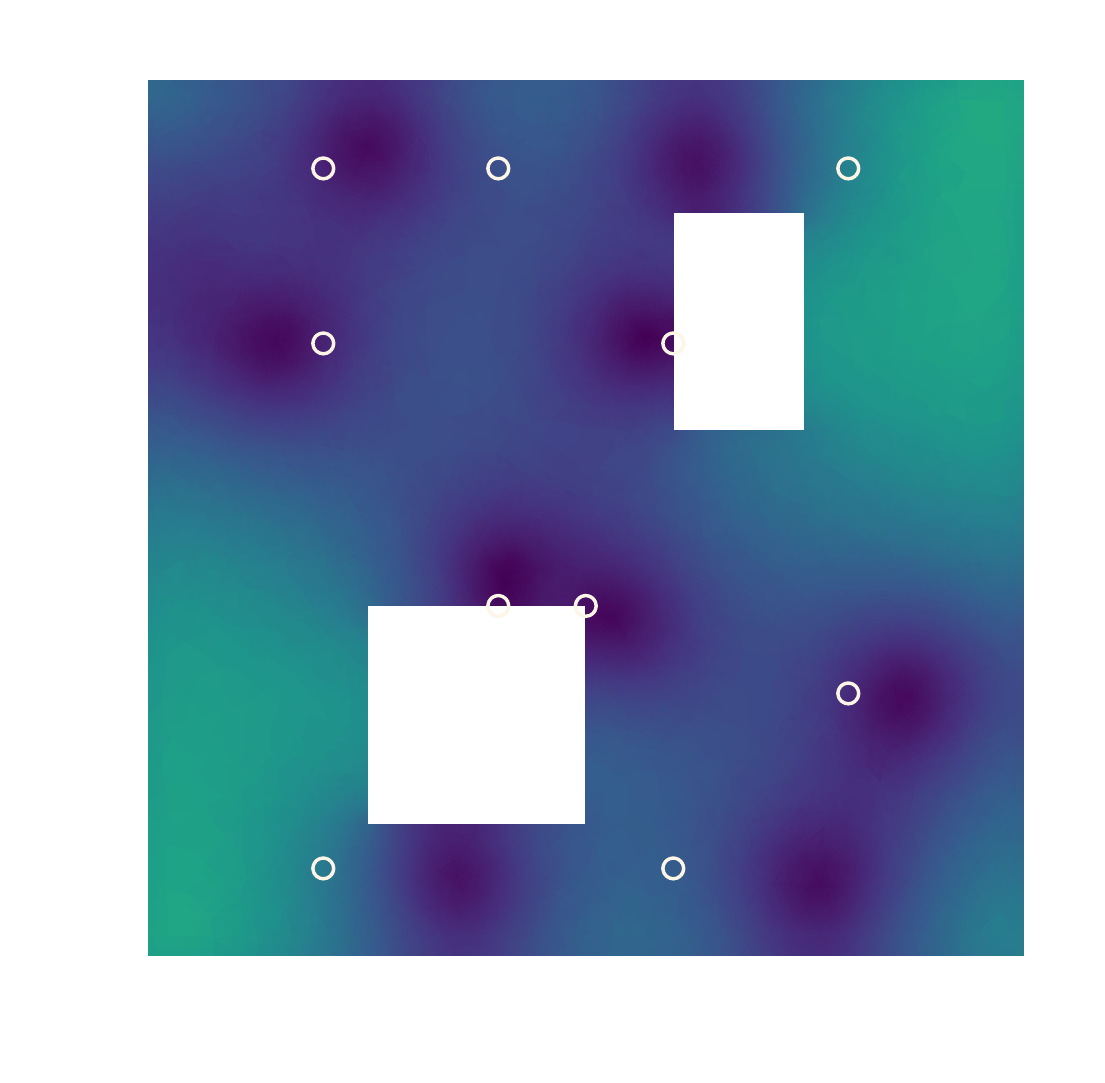}} 
        \caption{\small Pointwise posterior variance of the parameter at optimal designs for goals $\bP_1$,$\bP_2$, $\bP_3$, compared to a random design, for 10 sensors. The darker regions represent lower variance.}
        \label{fig:post}
    \vskip -0.2in
    \end{figure}
\subsection{Scalability w.r.t.\ parameter and data dimensions}
\label{sec:scalability}
Here we demonstrate the fast decay of the eigenvalues of $\bH_d^{\rho}$ and $\Delta \bH_d$ with respect to the parameter and data dimensions, as exploited by the algorithms of \cref{sec:svd}.
For $\bH_d^{\rho}$ defined in \cref{eq:Hdrho}, we have  rank($\bH_d^{\rho}$) $\leq \min(p,d)$ with QoI dimension $p$ and data dimension $d$. In practice, the QoI is often an averaged quantity with small  $p$, so the rank of $\bH_d^{\rho}$ is also small. In our tests we have rank($\bH_d^{\rho}$) $ = p = 1$. For $\Delta \bH_d = \bH_d-\bH_d^{\rho}$ with $\bH_d = \bF_d \bCpr \bF^*$, the spectrum of $\Delta \bH_d$ depends on that of $\bH_d$, which typically exhibits fast decay due to ill-posedness of inverse problems. 
As can be observed in the left plot of \cref{fig:scale}, the eigenvalues of $\Delta \bH_d$ decay very rapidly and independently of the parameter dimension, which implies that the required number of PDE solves is small and independent of the parameter dimension while achieving the same absolute accuracy of the approximate EIG by \cref{thm:low-rank-EIG}. The right plot in \cref{fig:scale} also illustrates rapid decay of eigenvalues, as well as  diminishing returns, with the increasing number of candidate sensors, suggesting that the number of PDE solves  is asymptotically independent of the data dimension for the same relative accuracy of the approximate EIG. These plots suggest that $O(100)$ PDE solves are required to accurately capture the information gained about the parameter field and QoI from the data, regardless of the parameter or sensor dimensions, when using randomized SVD ( \cref{alg:RSVD}). 
\begin{figure}[ht]
\vskip 0.2in
\begin{center}
\centerline{\includegraphics[width=.45\columnwidth]{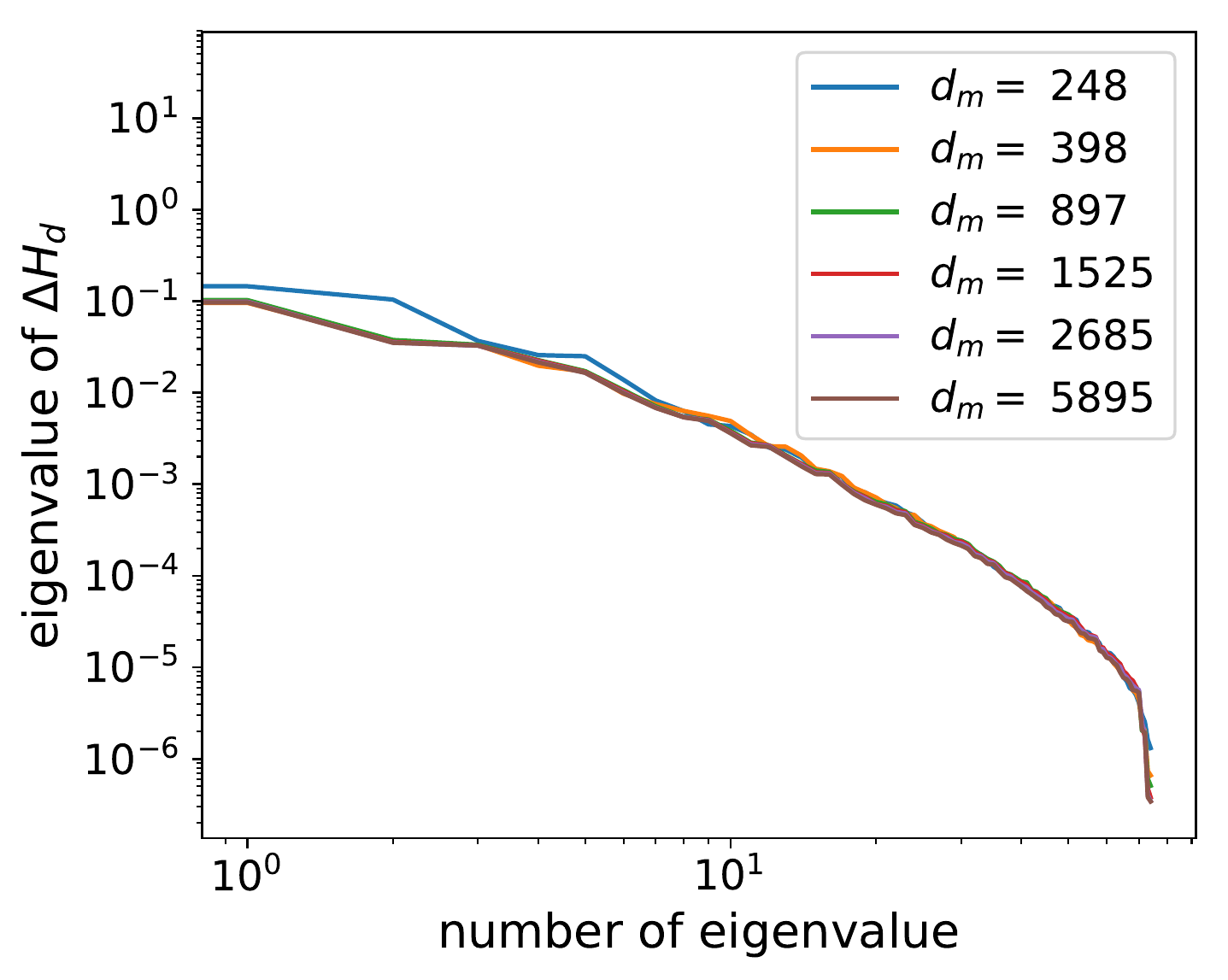}
\includegraphics[width=.45\columnwidth]{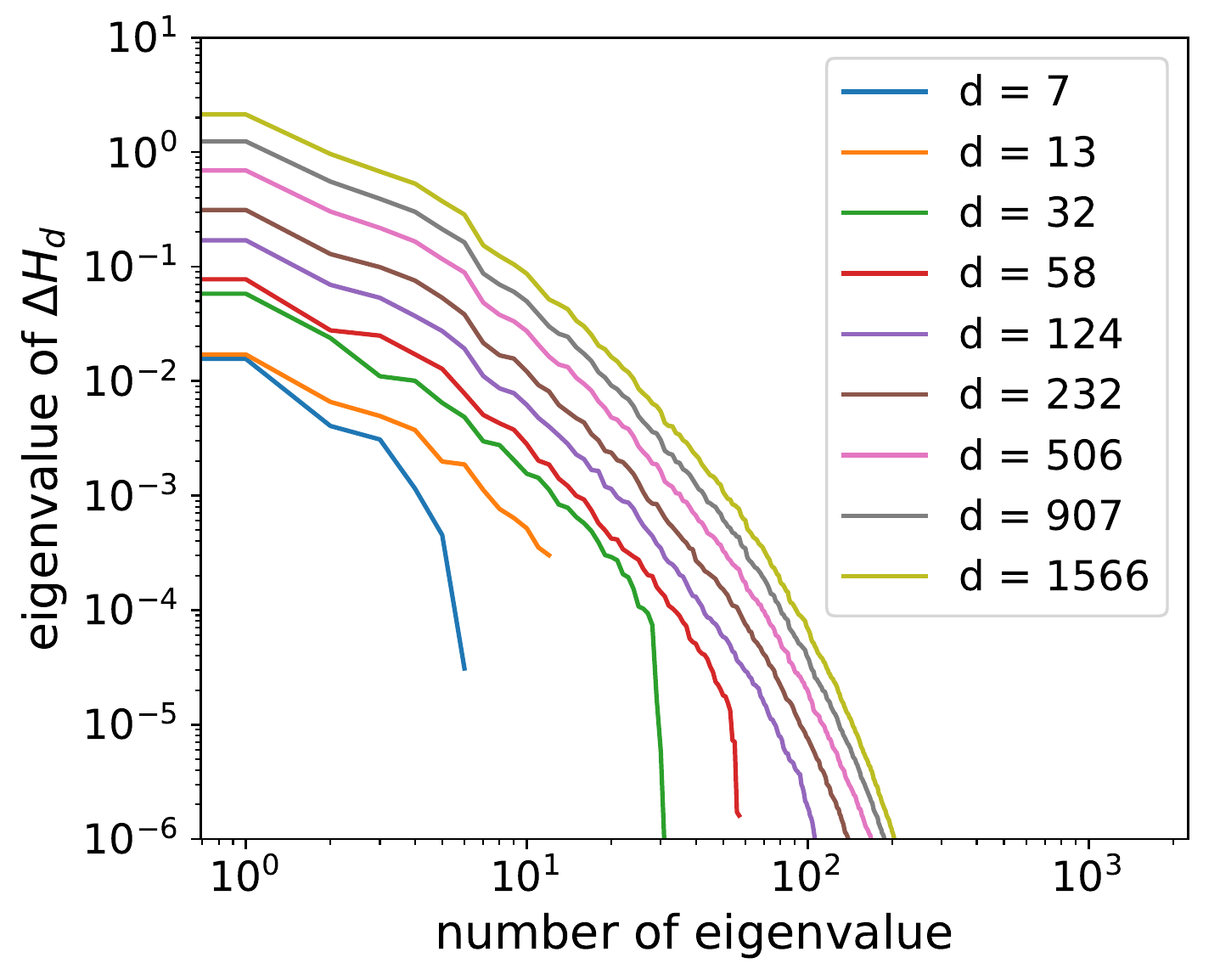}}
\caption{\small Decay of the eigenvalues of $\Delta \mathbf{H}_d$ with the increasing parameter dimension (left) and  data (candidate sensor locations) dimension (right).}
\label{fig:scale}
\end{center}
\vskip -0.2in
\end{figure}

\section{Conclusions}
We have developed a fast and scalable computational framework for goal-oriented linear Bayesian optimal experimental design governed by expensive models. Repeated fast evaluation of an (arbitrarily accurate) approximate EIG while avoiding model evaluations is made possible by an offline-online decomposition and low-rank approximation of certain operators informed by the parameter, data, and predictive goals of interest. Scalability, as measured by parameter- and data-dimension independence of the number of model evaluations, is achieved by carefully exploiting the GOOED problem's intrinsic low dimensionality as manifested by the rapid spectral decay of several critical operators. 
To justify the low-rank approximation of these operators in computing the EIG, we proved an upper bound for the approximation error in terms of the operators' truncated eigenvalues. Moreover, we proposed a new swapping greedy algorithm that is demonstrated to be more effective than the standard greedy algorithm in our experiments. Numerical experiments with optimal sensor placement for Bayesian inference of the initial condition of an advection--diffusion PDE demonstrated over 1000X speedups (measured in PDE solves).  
Future work includes extension to nonlinear Bayesian GOOED problems with nonlinear parameter-to-observable maps and nonlinear parameter-to-QoI maps.

\appendix
	\section*{Appendix A:  Low-rank approximation}
	\label{sec:AppendixA}
To compute the low-rank approximations of $\Delta \bH_d$ and $\bH_d^{\rho}$ as described in \cref{sec:svd}, we present the randomized SVD algorithm for these two quantities. Recall the explicit forms of $\Delta \bH_d$ and $\bH_d^{\rho}$ as 
\begin{equation}
\mathbf{H}^{\rho}_d = \bF_d\bCpr\bP^*\mathbf{\Sigma}_{\text{pr}}^{-1}\bP\bCpr\bF_d^*,
\Delta \bH_d =  \mathbf{F}_d\bCpr\mathbf{F}_d^*-\bF_d\bCpr\bP^*\mathbf{\Sigma}_{\text{pr}}^{-1}\bP\bCpr\bF_d^*.    
\end{equation}

\begin{algorithm}[H]
	\small
	\caption{Randomized SVD to compute $\bH$ with low rank $k$}
	\label{alg:RSVD}
	\begin{algorithmic}[1]
		\STATE Generate  i.i.d. Gaussian matrix $\bs{\Omega} \in \mathbb
		R^{d \times (k+p)}$ with an oversampling parameter $p$ very small (e.g., $p = 10$). 
		\STATE  Compute $\bs{Y} = \mathbf{H} \bs{\Omega} .$ 
		\STATE  Compute the QR factorization $\bs{Y} = \bs{Q}\bs{R}$ satisfying $\bs{Q}^T \bs{Q} = \bs{I}$.
		\STATE Compute $\bs{B} = \bs{Q}^T \mathbf{H}  \bs{Q}$. 
		\STATE Solve an eigenvalue problem for $\bs{B}$ such that $\bs{B} = \bs{Z} \bs{\Sigma} \bs{Z}^T$.
		\STATE Form $U_k = \bs{Q}\bs{Z}[1:k]$ and $\Sigma_k =\bs{\Sigma}[1:k, 1:k]$.
	\end{algorithmic}
\end{algorithm}
We see that this is a matrix-free eigensolver. Steps 2 and 4 represent $\Delta \bH_d$ action on $O(2(l+p))$ vectors and $\bH_d^{\rho}$ action on $O(2(k+p))$ vectors.  In terms of the total actions, it requires $2(2l+k+p)$ forward operator $\bF$ and $2(l+k+p)$ of its adjoint $\bF^*$, $2(k+l+p)$ prediction operator $\bP$ and its adjoint $\bP^*$. 
	
For the contaminant problem	given in \cref{sec:model-setting}, the concentration field $u(x,t)$ is given by 
\begin{equation}
\begin{split}
u_t - k \Delta u + \bs{v} \cdot \nabla u &= 0 \text{ in } \mathcal{D} \times (0,T),\\
u(\cdot, 0) & = m \text{ in } \mathcal{D} ,\\
k \nabla u \cdot \bs{n} & = 0 \text{ on } \partial\mathcal{D} \times (0,T),
\end{split}
\end{equation}
we can form the parameter-to-observable map $\bF \bipar$ as the discretized value of $\mathcal{B} u(m)$ where $\mathcal{B}$ is the pointwise observation operator.
The adjoint problem is a terminal value problem which can be solved backwards in time by the equation:
\begin{equation}
\begin{split}
-p_t - \nabla \cdot (p \bs{v})-k\Delta p &= \mathcal{B}^*\obs \text{ in } \mathcal{D} \times (0,T),\\
p(\cdot, T) & = 0 \text{ in } \mathcal{D} ,\\
(p\bs{v}+k\nabla p) \cdot \bs{n} & = 0 \text{ on } \partial\mathcal{D} \times (0,T).
\end{split}
\end{equation}
Then we can define the adjoint of the parameter-to-observable map $\bF^* \obs$ as the discretized value of $p(x,0)$ for any $\obs$.	

\bibliographystyle{siamplain}
\bibliography{references}
\end{document}